\documentclass[10pt]{article}
\usepackage{amsfonts,amssymb,mathrsfs,dsfont}
\usepackage[usenames,dvipsnames]{color}
\usepackage{amsmath}

\usepackage[citecolor=MidnightBlue,linkcolor=MidnightBlue,urlcolor=MidnightBlue,colorlinks=true]{hyperref}
\usepackage[top=1in, bottom=1in, left=1.25in, right=1.25in]{geometry}

\setcounter{MaxMatrixCols}{10}
\parskip=5pt
\textheight21cm
\parskip=5pt

\newtheorem{theorem}{Theorem}[section]

\newtheorem{definition}[theorem]{Definition}
\newtheorem{example}[theorem]{Example}
\newtheorem{lemma}[theorem]{Lemma}
\newtheorem{proposition}[theorem]{Proposition}
\newtheorem{remark}[theorem]{Remark}
\newenvironment{proof}[1][Proof]{\noindent \textbf{#1.} }{\  $\Box$}
\numberwithin{equation}{section}
\begin{document}

\title{BSDEs driven by $G$-Brownian motion with  uniformly continuous generators}
\author{
Falei Wang \thanks{Zhongtai Securities Institute for Finance Studies and Institute for Advanced Research, Shandong University,
flwang@sdu.edu.cn. Research partially supported by  the National Natural Science Foundation of China  (No. 11601282) and the Natural Science Foundation of  Shandong Province (No. ZR2016AQ10).}
\and  Guoqiang Zheng\thanks{
School of Mathematics, Southeast  University, gqzheng@seu.edu.cn. Research partially  supported by the Fundamental Research Funds for the Central Universities (No. 3207018202) and
the National Natural Science Foundation of China (No. 11671231 and No. L162400033). } }
\maketitle
\date{}

\begin{abstract}
The present paper is devoted to investigating the existence and uniqueness of solutions to a class of non-Lipschitz  scalar valued  backward stochastic differential equations driven by $G$-Brownian motion ($G$-BSDEs). In fact, when the generators are Lipschitz continuous in $y$ and uniformly continuous in $z$, we construct the unique solution  to such equations by monotone convergence argument. The comparison theorem and related Feynman-Kac formula are stated as well.
\end{abstract}

\textbf{Keywords}: $G$-Brownian motion,  BSDE, uniformly continuous generators.

\textbf{Mathematics Subject Classification(2000)}: 60H10, 60H30.
\section{Introduction}
Given a Wiener space $(\Omega,\mathcal{F},\{\mathcal{F}_t\}_{t\in[0,\infty)},P_0)$, under which the canonical process $W_t$ constitutes a Brownian motion.
A typical nonlinear Lipschitz backward stochastic differential equations (BSDEs), which is formulated in Pardoux and Peng \cite{PP}, takes the form,
$$  Y_t=\xi +\int_t^T f(s,Y_s,Z_s)ds-\int_t^T  Z_s dW_s.$$
The authors found  a unique pair of adapted processes $(Y_s,Z_s)_{0\leq s\leq T}$ that satisfy the above equation for given squarely integrable  terminal value $\xi$ and Lipschitz generator $f$.

From then on, extensive  efforts have been made towards relaxing the Lipschitz  assumptions on the generator $f$. To mention just a few, for the scalar case, i.e., when $Y$ is 1-dimensional, Lepeltier and Martin \cite{LM} confirmed the existence of  solutions to BSDEs with continuous generator that is of linear growth. Kobylanski \cite{K} developed  the existence  for BSDEs with continuous generator that has a quadratic growth in $z$ when the terminal value $\xi$ is bounded. Also for the quadratic cases, Briand and Hu \cite{BH2006,BH2008} successively obtained the existence of solution for unbounded $\xi$.
More imaginative works on generalizing  the classical BSDEs theory from    different points of view   are emerging, and it would be too ambitious for us to give  an overview of all variants.
In this paper, we  focus on the study  of BSDEs under $G$-expectation framework.

The $G$-expectation theory was put forth by Peng \cite{P07,P08,P10}, which provides a unified  tool for stochastic analysis problems that involve non-dominated family of probability measures. In particular, the $G$-Brownian motion process is constructed with uncertain  quadratic variation process, a  feature that is helpful in capturing  the volatility fluctuations of financial market.
However there are many challenges in the research of $G$-expectation due to the uncertainty, for instance, the general dominated convergence theorem  is not available, see Example 11 in \cite{HJ1}.
Furthermore,  there exist non-increasing and continuous $G$-martingales called non-symmetrical $G$-martingales, which
 makes the $G$-martingale representation theorem  more difficult, see \cite{P10, STZ, Song11}.

 Recently, Hu, Ji, Peng and Song  \cite{HJPS2014} considered  the well-posedness problem of BSDEs driven by $G$-Brownian motion $B$:
 \begin{equation}\label{gbsd}
 Y_t=\xi +\int_t^T f(s,Y_s,Z_s)ds+\int_{t}^{T}g^{ij}(s,Y_s,Z_s)d\langle B^i, B^j\rangle_{s}-\int_t^T  Z_s dB_s- (K_T-K_t),
 \end{equation}
 where $\langle B^i, B^j\rangle$ denotes the quadratic (co)variation process and the generators $f,g$ are  Lipschitz continuous in $(y,z)$.
 The solution of  $G$-BSDE \eqref{gbsd} consists of a triple of processes $(Y,Z,K)$, where $K$ is a   non-symmetrical $G$-martingale.
 Note that  the classical Banach contraction mapping principle cannot be applied directly to this  equation due to the existence of $K$. The authors use PDE techniques and an   approximation of Galerkin type
 to obtain the existence and uniqueness result of $G$-BSDE \eqref{gbsd}. In an  accompany paper \cite{HJPS2014C}, they established the comparison theorem, Girsanov theorem and the relevant  nonlinear Feynman-Kac formula.

Note that there are at least two characteristics that make  the study  on $G$-BSDEs meaningful, for one thing, we could establish a connection with fully nonlinear parabolic partial differential equations (PDEs) using $G$-BSDEs, for the other, since there exists a family of non-dominated, mutually singular martingale measures underlying the $G$-Brwonian motion,  one can solve simultaneously a family of  classical BSDEs  driven by mutually singular continuous martingales through dealing with only one aggregated $G$-BSDE.  Moreover,  Song \cite{Song14} obtained gradient estimates for certain   nonlinear  partial differential equations (PDEs) by combining $G$-expectation theory with  coupling methods.
A close  approach to $G$-BSDEs is the so-called second order BSDEs framework proposed   independently by Soner, Touzi and Zhang \cite{STZ1}.

 Still there are further research papers on getting rid of the Lipschitz  assumptions, and extensions of $G$-BSDEs from different aspects. For instance, Hu, Lin and Soumana Hima \cite{HLH}
studied the $G$-BSDEs under quadratic assumptions on coefficients and Li, Peng and Soumana Hima \cite{LPH} considered $G$-BSDE with reflection,  for which situation the solution is forced
to lie above a prescribed continuous process. This paper is devoted to the research of the existence of solution to equation \eqref{gbsd} when  $f,g$ are Lipschitz continuous in $y$ and uniformly continuous in $z$, yet with a linear growth in both arguments.

In classical situation, the  term $B$ in \eqref{gbsd} boils down to the standard Wiener process, $d\langle B^i,B^j\rangle_t=(dt)\mathbf{1}_{i=j}$, and the term $K$ vanishes.
It was  due to Lepeltier and Martin \cite{LM} that confirmed those  equations  allow for solutions by monotone convergence argument, indeed their results hold for BSDEs that have continuous coefficients with linear growth,  and Jia \cite{J,J2010} supplemented with  proofs on uniqueness of solution.
However Lepeltier and Martin's arguments  cannot be applied directly to investigate the existence of solution to equation \eqref{gbsd},
 because the monotone convergence theorem  of $G$-expectation is hardly at hand and the convergence of approximating sequences of $G$-BSDEs is not obvious.

Our observation is,  this obstacle can be overcome with the help  of a uniform estimate for approximating sequences of $G$-BSDEs, see Lemma \ref{lemma2}.
Indeed, the  uniformly continuous generators can be approximated uniformly by a sequence of Lipschitz generators (see \cite{J2010}),
 from which we could prove the convergence of  approximating sequences of $G$-BSDEs based on the linearization method of \cite{HJPS2014C} and \cite{HW}.
Then we obtain  the existence and uniqueness of the solution to $G$-BSDE \eqref{gbsd} by $G$-stochastic analysis technique.
Since our work relies heavily on the comparison theorem of  Lipschitz $G$-BSDEs, we can only deal with the one dimensional case.
And the comparison theorem still holds for this type of $G$-BSDEs.
 Finally the connection of this equations with the second order fully nonlinear PDEs  are discussed,
 thanks to the stability of viscosity solution, we show the solution to Markovian $G$-BSDE \eqref{gbsd} defines the unique solution to the related  PDE, in the spirit of Feynman-Kac formula.

 The paper is organized as follows.  In the Section 2, we provide with preliminary notions on $G$-expectation and Lipschitz $G$-BSDEs.
 In Section 3, we state and prove our main theorem and the comparison theorem of our version. As an application, a slightly more general form of nonlinear Feynman-Kac formula is obtained in Section 4.

\section{Preliminaries}
To begin with, we shall recall some ingredients of $G$-expectation theory mainly from the seminal work of Peng \cite{P10}, and then of $G$-BSDEs results from \cite{HJPS2014,HJPS2014C}.

\subsection{$G$-expectation }
Consider the canonical path space $\Omega=C_{0}([0,\infty),\mathbb{R}^d)$, all continuous paths $\omega$ vanishing at zero, i.e., $\omega_0=0$. $(\Omega,\rho)$ is readily seen to be a complete separable metric space, where $\rho$ is given by,
\[
\rho(\omega^{1},\omega^{2}):=\sum_{i=1}^{\infty}2^{-i}[(\max_{t\in
\lbrack 0,i]}|\omega_{t}^{1}-\omega_{t}^{2}|)\wedge1].
\]
In the sequel, we will make use of these notations,
\begin{itemize}
\item  $B$ denotes the $d$-dimensional canonical process, i.e. $B_t(\omega)=\omega_t$, for any $\omega\in\Omega$.
\item $\mathcal{B}(\Omega)$ denotes the Borel $\sigma$-algebra of $\Omega$, similarly we have $\mathcal{B}(\Omega_{t})$ with  $\Omega_{t}:=\{ \omega_{\cdot \wedge t}:\omega \in \Omega \}$.
\item $L_{ip}(\Omega):=\{ \varphi(B_{t_{1}},\ldots,B_{t_{k}}):k\in \mathbb{N}
,t_{1},\ldots,t_{k}\in \lbrack0,\infty),\varphi \in C_{b.Lip}(\mathbb{R}
^{k\times d })\}$, where $C_{b.Lip}(\mathbb{R}^{k\times d})$ collects all
bounded Lipschitz functions on $\mathbb{R}^{k\times d}$, $L_{ip}(\Omega_{t})$ denotes all $\mathcal{B}(\Omega_{t})$-measurable elements in $L_{ip}%
(\Omega)$.
\item $\mathbb{S}_{d}$ denote all symmetric matrices of size $d$.
\end{itemize}

For any given monotonic sublinear   continuous function $G:\mathbb{S}_d\rightarrow \mathbb{R}$, Peng \cite{P10} associated it with a nonlinear $G$-expectation $\mathbb{\hat{E}}[\cdot]$ using a nonlinear parabolic PDE,
which in turn makes the canonical processes $B$ a $d$-dimensional $G$-Brownian motion, ending up with the so-called $G$-expectation space $(\Omega,L_{ip}(\Omega),\mathbb{\hat{E}%
}[\cdot],(\mathbb{\hat{E}}_{t}[\cdot])_{t\geq0})$. The readers are referred to \cite{P07,P08,P10} for detailed construction and so forth.

  For each
$p\geq1$, the completion of $L_{ip}(\Omega)$ under the norm
$||X||_{L_{G}^{p}}:=(\mathbb{\hat{E}}[|X|^{p}])^{1/p}$ is denoted by
$L_{G}^{p}(\Omega)$. Similarly, we can define $L_{G}^{p}(\Omega_t)$ for each $t>0$.
 The $G$-expectation $\mathbb{\hat{E}}[\cdot]$ and conditional $G$-expectation can be
extended continuously to the completion  $L_{G}^{p}(\Omega)$.
And the $G$-expectation can be regarded as a upper expectation.
\begin{theorem}[\cite{DHP2011,HP09}]
\label{the2.7}  There exists a weakly compact set
$\mathcal{P}$ of probability
measures on $(\Omega,\mathcal{B}(\Omega))$, such that
\[
\mathbb{\hat{E}}[\xi]=\sup_{P\in\mathcal{P}}E_{P}[\xi],\  \text{for
 all}\ \xi\in  {L}_{G}^{1}{(\Omega)}.
\]
\end{theorem}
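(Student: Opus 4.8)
The plan is to produce the set $\mathcal{P}$ explicitly as a family of martingale laws on path space, verify the representation identity first on the dense subspace $L_{ip}(\Omega)$, and then extend it to $L_G^1(\Omega)$ by continuity; weak compactness will come from a tightness estimate. Since $G$ is monotone, sublinear and continuous, it admits the representation $G(A)=\tfrac12\sup_{Q\in\Sigma}\mathrm{tr}(QA)$ for some bounded, closed, convex set $\Sigma$ of nonnegative symmetric matrices. On the reference Wiener space $(\Omega,\mathcal{F},\{\mathcal{F}_t\},P_0)$ carrying the canonical Brownian motion $W$, I would let $\mathcal{A}$ denote the adapted processes $\gamma$ with $\gamma_s\gamma_s^{\mathrm T}\in\Sigma$ for all $s$, set $X^\gamma_t:=\int_0^t\gamma_s\,dW_s$, and define $\mathcal{P}_0:=\{P_0\circ (X^\gamma)^{-1}:\gamma\in\mathcal{A}\}$, taking $\mathcal{P}$ to be the closure of $\mathcal{P}_0$ in the topology of weak convergence.

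The central step is to prove $\mathbb{\hat{E}}[\xi]=\sup_{P\in\mathcal{P}}E_P[\xi]$ for cylinder functionals $\xi=\varphi(B_{t_1},\ldots,B_{t_k})\in L_{ip}(\Omega)$. By construction $\mathbb{\hat{E}}[\xi]$ is computed by iterating the nonlinear $G$-heat equation $\partial_t u-G(D^2u)=0$, whereas $\sup_{P\in\mathcal{P}}E_P[\xi]$ is the value of a stochastic control problem over the volatilities $\gamma\in\mathcal{A}$. Identifying the two is exactly the Feynman--Kac (verification) statement that the viscosity solution of the $G$-equation coincides with this control value: the dynamic programming principle shows the control value solves the same Hamilton--Jacobi--Bellman equation, and uniqueness of bounded uniformly continuous viscosity solutions forces equality. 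I expect this matching to be the main obstacle, since the inequality $\mathbb{\hat{E}}[\xi]\le\sup_{P\in\mathcal{P}}E_P[\xi]$ requires constructing near-optimal controls that realize the PDE value, and it is where the full strength of the PDE/control correspondence is used.

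For weak compactness I would establish tightness of $\mathcal{P}_0$ and invoke Prokhorov's theorem. Boundedness of $\Sigma$ gives, via the Burkholder--Davis--Gundy inequality, a uniform bound $E_{P_0}[|X^\gamma_t-X^\gamma_s|^{2m}]\le C_m|t-s|^m$ independent of $\gamma$, so the Kolmogorov--Chentsov criterion yields uniform control of the moduli of continuity and hence tightness on $C_0([0,\infty),\mathbb{R}^d)$; the weak closure $\mathcal{P}$ is therefore weakly compact. Finally, both functionals $\xi\mapsto\mathbb{\hat{E}}[\xi]$ and $\xi\mapsto\sup_{P\in\mathcal{P}}E_P[\xi]$ are sublinear and $1$-Lipschitz with respect to $\|\cdot\|_{L_G^1}$ (the latter because $E_P[|\xi-\eta|]\le\mathbb{\hat{E}}[|\xi-\eta|]$ for every $P\in\mathcal{P}$). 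As they agree on the $\|\cdot\|_{L_G^1}$-dense set $L_{ip}(\Omega)$, a standard approximation argument extends the identity to the whole completion $L_G^1(\Omega)$, completing the proof.
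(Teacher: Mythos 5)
The paper never proves this statement: Theorem \ref{the2.7} is quoted from \cite{DHP2011,HP09} as a preliminary, so there is no internal proof to compare against. Your proposal is, in substance, a faithful reconstruction of the argument in those cited references: the explicit family $\mathcal{P}_0$ of laws of $\int_0^{\cdot}\gamma_s\,dW_s$ with $\gamma_s\gamma_s^{\mathrm{T}}$ ranging in the set $\Sigma$ representing $G$, the identification $\mathbb{\hat{E}}[\xi]=\sup_{P}E_P[\xi]$ on cylinder functions via the dynamic programming principle and uniqueness of viscosity solutions of the $G$-heat equation, tightness from BDG/Kolmogorov moment bounds plus Prokhorov, and extension to the completion by $1$-Lipschitz continuity of both sublinear functionals; all of these steps are sound. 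The one point you gloss over is what $E_P[\xi]$ even means when $\xi$ is an abstract element of the completion $L_G^1(\Omega)$: your density argument shows the continuous extension of $\sup_{P}E_P[\cdot]$ agrees with $\mathbb{\hat{E}}[\cdot]$, but to read the conclusion as a statement about expectations of an actual random variable one must produce a quasi-surely defined representative of $\xi$ (e.g.\ via a Borel--Cantelli argument under the capacity $c$ along a fast Cauchy subsequence, yielding quasi-continuous versions); this identification is a substantive part of \cite{DHP2011}, not a routine remark, and should be stated explicitly. Likewise, when passing from $\mathcal{P}_0$ to its weak closure $\mathcal{P}$ you should note that $P\mapsto E_P[\xi]$ is weakly continuous for bounded continuous cylinder $\xi$, so the supremum is unchanged; with these two points added your outline matches the cited proof.
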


For this $\mathcal{P}$, we define a capacity
\[
c(A):=\sup_{P\in\mathcal{P}}P(A),\ A\in\mathcal{B}(\Omega).
\]
A set $A\in\mathcal{B}(\Omega)$ is polar if $c(A)=0$.  A
property holds $``quasi$-$surely"$ (q.s.) if it holds except for a
polar set. In what follows,  we do not distinguish two random
variables between  $X$ and $Y$, if $X=Y$ q.s..

Now we state the nonlinear monotone convergence theorem, which is different from the linear case.
\begin{proposition}[\cite{DHP2011}]\label{downward convergence proposition} Suppose $X_n$, $n\geq 1$ and $X$ are $\mathcal{B}(\Omega)$-measurable.
If $\{X_n\}_{n=1}^\infty$ in ${L}_{G}^{1}(\Omega)$ satisfies that $X_n\downarrow X$, q.s.,  then
$\mathbb{\hat{E}}[X_n]\downarrow\mathbb{\hat{E}}[X].	$
\end{proposition}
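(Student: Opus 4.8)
The plan is to exploit the representation of $\mathbb{\hat{E}}$ as an upper expectation over the weakly compact family $\mathcal{P}$ (Theorem \ref{the2.7}) and to reduce the statement to the classical monotone convergence theorem under a single, well-chosen measure. First I would record the easy half. Since $X_{n+1}\leq X_n$ q.s. and every polar set is $P$-null for each $P\in\mathcal{P}$, monotonicity of each $E_P$ and of the supremum show that $\mathbb{\hat{E}}[X_n]$ is nonincreasing; moreover $X\leq X_n$ q.s. yields $\mathbb{\hat{E}}[X]\leq\mathbb{\hat{E}}[X_n]$ for every $n$. Hence $\mathbb{\hat{E}}[X_n]$ decreases to some limit $\ell$ with $\ell\geq\mathbb{\hat{E}}[X]$. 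The whole content of the proposition is therefore the reverse inequality $\ell\leq\mathbb{\hat{E}}[X]$, and this is exactly where the nonlinearity bites: unlike the classical case we must interchange the $\inf_n$ coming from the decreasing limit with the $\sup_{P\in\mathcal{P}}$ hidden in $\mathbb{\hat{E}}$, and the general bound $\inf_n\sup_P\geq\sup_P\inf_n$ only reproduces the direction we already have.

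For the hard inequality I would argue by a compactness and extraction scheme. For each $n$ choose, by the definition of the supremum in Theorem \ref{the2.7}, a measure $P_n\in\mathcal{P}$ with $E_{P_n}[X_n]\geq\mathbb{\hat{E}}[X_n]-1/n\geq\ell-1/n$. Since $(\Omega,\rho)$ is a complete separable metric space, the weak topology on $\mathcal{P}$ is metrizable, so weak compactness of $\mathcal{P}$ yields a subsequence $P_{n_k}$ converging weakly to some $P^{*}\in\mathcal{P}$. The key monotonicity observation is that for any fixed $m$ and all $n_k\geq m$ one has $X_{n_k}\leq X_m$ q.s., hence $E_{P_{n_k}}[X_m]\geq E_{P_{n_k}}[X_{n_k}]\geq\ell-1/n_k$. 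Passing $k\to\infty$ I would obtain $E_{P^{*}}[X_m]\geq\ell$ for every $m$. Finally, along the single measure $P^{*}$ the sequence $X_m\downarrow X$ holds $P^{*}$-a.s. and is dominated by $X_1\in L^{1}(P^{*})$ (because $E_{P^{*}}[|X_1|]\leq\mathbb{\hat{E}}[|X_1|]<\infty$), so the classical dominated convergence theorem gives $E_{P^{*}}[X_m]\downarrow E_{P^{*}}[X]$. Combining, $\mathbb{\hat{E}}[X]\geq E_{P^{*}}[X]=\lim_m E_{P^{*}}[X_m]\geq\ell$, which closes the argument.

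The step I expect to be the main obstacle is the passage $E_{P_{n_k}}[X_m]\to E_{P^{*}}[X_m]$ as $k\to\infty$. Weak convergence $P_{n_k}\rightharpoonup P^{*}$ guarantees only the convergence of integrals of bounded continuous functions, whereas a generic $X_m\in L_G^1(\Omega)$ is merely an $L_G^1$-limit of the bounded Lipschitz cylinder functionals in $L_{ip}(\Omega)$ and need not itself be bounded. To bridge this gap I would approximate $X_m$ in $L_G^1$-norm by some $\xi\in L_{ip}(\Omega)$, control the resulting error uniformly in $k$ via $\sup_{P\in\mathcal{P}}E_P[|X_m-\xi|]=\mathbb{\hat{E}}[|X_m-\xi|]$, and apply weak convergence to the bounded continuous $\xi$; a triangle-inequality estimate then forces $E_{P_{n_k}}[X_m]\to E_{P^{*}}[X_m]$. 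The decisive ingredient here is the uniform integrability of $X_m$ against the \emph{entire} family $\mathcal{P}$ (equivalently, the quasi-continuity characterization of $L_G^1(\Omega)$), which makes the $\varepsilon$-estimates uniform over the subsequence $\{P_{n_k}\}$ and legitimizes the double limit. Everything else, namely the monotonicity, the extraction, and the single-measure convergence, is routine once this uniform approximation is in place.
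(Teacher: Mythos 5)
Your proof is correct and is essentially the argument of the cited source: the paper itself states this proposition without proof, quoting \cite{DHP2011}, and the proof there proceeds exactly as yours does --- pick near-maximizing measures $P_n\in\mathcal{P}$ with $E_{P_n}[X_n]\geq \mathbb{\hat{E}}[X_n]-1/n$, extract a weakly convergent subsequence $P_{n_k}\rightharpoonup P^*$ from the weakly compact set $\mathcal{P}$, use the uniform-over-$\mathcal{P}$ approximation of each $X_m\in L_G^1(\Omega)$ by elements of $L_{ip}(\Omega)$ to upgrade weak convergence to $E_{P_{n_k}}[X_m]\to E_{P^*}[X_m]$, and conclude by classical convergence under the single measure $P^*$. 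The only cosmetic point is that the final step should be phrased as one-sided monotone convergence (applied to $X_1-X_m\uparrow X_1-X\geq 0$) rather than dominated convergence, since you only control $X_m\leq X_1$ from above and $E_{P^*}[X]$ may a priori equal $-\infty$; this changes nothing in the argument.
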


Peng also introduced the stochastic integral with respect to $G$-Brownian motion, which led to a symmetric $G$-martingale.
Given a fixed contant $T>0$, the following spaces of stochastic processes will be useful,
\begin{itemize}
 \item $M_{G}^{0}(0,T):=\{\eta:\ \eta_{t}(\omega)=\sum\limits_{j=0}^{N-1}\xi_{j}(\omega)I_{[t_{j},t_{j+1})}(t),\  \xi_{i}\in L_{ip}(\Omega_{t_{i}})$ for some partition $t_{0}\leq t_1\leq \ldots\leq t_{N}$ of $[0,T]\}.$
 \item $M_{G}^{2}(0,T)$ is the completion of  $M_{G}^{0}(0,T)$ under norm $\Vert \eta \Vert_{M_{G}^{2}}=\{ \mathbb{\hat{E}}[\int_{0}^{T}|\eta_{s}|^{2}ds]\}^{1/2}.$
 \item $S_{G}^{0}(0,T)=\{h(t,B_{t_{1}\wedge t},\cdot \cdot \cdot,B_{t_{n}\wedge t}):t_{1},\ldots,t_{n}\in \lbrack0,T],h\in C_{b,Lip}(\mathbb{R}^{n+1})\}$.
 \item $S_{G}^{2}(0,T)$ is the completion of $S_{G}^{0}(0,T)$ under the norm $\Vert \eta\Vert_{S_{G}^{2}}=\{ \mathbb{\hat{E}}[\sup_{t\in\lbrack0,T]}|\eta_{t}|^{2}]\}^{\frac{1}{2}}$.
\end{itemize}

For each $1\leq i, j \leq d$,   we denote by $\langle B^i, B^j\rangle$   the mutual  variation process.
Then for two processes $ \eta\in M_{G}^{2}(0,T)$ and $ \xi\in M_{G}^{1}(0,T)$,
the $G$-It\^{o} integrals  $\int^{\cdot}_0\eta_sdB^i_s$ and $\int^{\cdot}_0\xi_sd\langle
B^i,B^j\rangle_s$  are well defined, see Peng \cite{P10}.
Moreover,   the corresponding $G$-It\^{o} formula were established.

\subsection{ $G$-BSDEs with Lipschitz assumptions}
From now on, we always assume that the function $G$  is  non-degenerate throughout our paper, i.e. there are two constants $0<\underline{\sigma}^{2}\leq\bar{\sigma}^{2}<\infty$ such that
$$ \frac{1}{2}\underline{\sigma}^2\text{tr}[A-A']\leq G(A)-G(A')\leq \frac{1}{2}\bar{\sigma}^2\text{tr}[A-A'], \text{ for } A\geq A'.$$
Then there exists a bounded and closed subset $\Gamma
\subset\mathbb{S}^+(d)$ such that
\begin{equation*}
G(A)=\frac{1}{2}\underset{Q\in\Gamma}{\sup}\mathrm{tr}[AQ],
\label{G-representation}
\end{equation*}
where $\mathbb{S}^+(d)$ denotes the space of all $d\times d$ symmetric positive-definite matrices.

Consider the following $G$-BSDEs (recall  that we use Einstein summation convention):
\begin{equation}\label{GBSDE}
 Y_t=\xi +\int_t^T f(s,Y_s,Z_s)ds+\int_{t}^{T}g^{ij}(s,Y_s,Z_s)d\langle B^i, B^j\rangle_{s}-\int_t^T  Z_s dB_s- (K_T-K_t),
 \end{equation}
in which the generators $$f(t,\omega,y,z), g^{ij}(t,\omega,y,z): [0,T]\times\Omega_T\times\mathbb{R}\times\mathbb{R}^d\rightarrow\mathbb{R}$$ and the terminal value $\xi$ fulfill these assumptions,
\begin{flalign*}
&\textbf{(A1)} \text{ For some }  \beta >2, \xi\in L_G^\beta(\Omega_T); \text{ for any }(y,z), f(\cdot, \cdot,y,z)\in M_G^\beta(0,T),g^{ij}(\cdot, \cdot,y,z)\in M_G^\beta(0,T).   &     \\
&\textbf{(A2)} \text{ There is a Lipschitz constant } L_0>0, \text{so that}    &
\end{flalign*}
$$ \vert f(t,y,z)-f(t,y',z')\vert +\vert g^{ij}(t,y,z)-g^{ij}(t,y',z')\vert  \leq L_0\vert y-y'\vert +L_0 \vert z-z'\vert.$$

For simplicity, we denote by $\mathfrak{S}_G^{2}(0,T)$ the collection of process $(Y,Z,K)$ such that $Y\in S_G^{2}(0,T)$, $Z\in M_G^{2}(0,T;\mathbb{R}^d)$, $K$ is a non-increasing $G$-martingale with $K_0=0$ and $K_T\in L_G^{2}(\Omega_T)$. Hu et al. \cite{HJPS2014,HJPS2014C} firstly obtained the existence and uniqueness result on Lipschitz $G$-BSDEs \eqref{GBSDE}, and the  comparison principle.
\begin{theorem}[\cite{HJPS2014}]\label{G1}
 Assume the conditions $\textbf{(A1)}$ and $\textbf{(A2)}$ hold.
   Then the equation $(\ref{GBSDE})$ admits a unique solution  $(Y,Z,K)\in \mathfrak{S}_G^{2}(0,T)$.
 \end{theorem}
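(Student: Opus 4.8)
The plan is to separate the argument into uniqueness and existence, with a priori estimates carrying most of the weight in both. First I would show that any solution $(Y,Z,K)\in\mathfrak{S}_G^{2}(0,T)$ of \eqref{GBSDE} obeys an estimate of the form
$\|Y\|_{S_G^2}^2+\|Z\|_{M_G^2}^2+\|K_T\|_{L_G^2}^2\le C\,\mathbb{\hat{E}}\big[\,|\xi|^2+(\int_0^T|f^0_s|\,ds)^2+(\int_0^T|g^0_s|\,ds)^2\,\big]$,
where $f^0_s=f(s,0,0)$ and $g^0_s=g(s,0,0)$, and $C$ depends only on $T$, $L_0$, $\underline{\sigma}$ and $\bar{\sigma}$. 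The tool is the $G$-It\^o formula applied to $|Y_t|^2$ (exploiting $\beta>2$ to upgrade integrability where needed), combined with the fact that $K$ is non-increasing, which makes the $-dK$ contribution cooperate rather than obstruct, followed by a Gronwall argument using \textbf{(A2)}. Theorem \ref{the2.7} lets me pass between $\mathbb{\hat{E}}$ and the worst-case $E_P$ over $\mathcal{P}$ whenever a classical inequality such as Burkholder--Davis--Gundy is invoked.

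Given these estimates, uniqueness is immediate. Applying the same computation to the differences $(Y-Y',\,Z-Z',\,K-K')$ of two solutions, whose effective data are controlled by the Lipschitz moduli of $f$ and $g$, I absorb the resulting terms on a short interval and iterate over a partition of $[0,T]$ to force $Y=Y'$ and $Z=Z'$; the equation then yields $K=K'$ quasi-surely.

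For existence I would proceed in two stages. In the decoupled case, where $f$ and $g^{ij}$ do not depend on $(y,z)$ but are merely processes in $M_G^\beta(0,T)$, I set $Y_t:=\mathbb{\hat{E}}_t[\xi+\int_t^T f_s\,ds+\int_t^T g^{ij}_s\,d\langle B^i,B^j\rangle_s]$ and observe that $M_t:=Y_t+\int_0^t f_s\,ds+\int_0^t g^{ij}_s\,d\langle B^i,B^j\rangle_s$ is a $G$-martingale, being the conditional $G$-expectation of a fixed random variable. The $G$-martingale representation theorem then furnishes $Z\in M_G^2(0,T;\mathbb{R}^d)$ and a non-increasing $G$-martingale $K$ with $K_0=0$ such that $M_t=M_0+\int_0^t Z_s\,dB_s+K_t$; unwinding the definition of $M$ shows $(Y,Z,K)$ solves \eqref{GBSDE}. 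In the second stage I reinstate the state dependence by feeding a frozen pair $(y,z)$ into the generators, defining a map $\Phi$ that sends $(y,z)$ to the pair $(Y,Z)$ solving the decoupled equation with data $f(s,y_s,z_s)$, $g^{ij}(s,y_s,z_s)$. Using the a priori estimates I would show $\Phi$ is a contraction on $[T-\delta,T]$ for $\delta$ small depending only on $L_0$ and the ellipticity constants, extract its fixed point there, and paste solutions backward over a partition $0=T_0<\cdots<T_n=T$ of mesh below $\delta$.

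The step I expect to be genuinely hard, and the reason one cannot simply run a contraction on the whole problem, is the construction and control of the non-increasing $G$-martingale $K$ in the decoupled case. Extracting $K$ rests on the nonlinear decomposition of $G$-martingales, which is far more delicate than classical martingale representation, and closing the a priori estimate requires bounding $\|K_T\|_{L_G^2}$; the usual route of taking expectations and using $K_T\le 0$ must be executed inside the sublinear calculus, where Proposition \ref{downward convergence proposition} replaces the ordinary dominated convergence theorem. Obtaining uniform control of these $K$-terms along the various approximations is precisely the obstacle that the later uniform estimate (Lemma \ref{lemma2}) is designed to surmount.
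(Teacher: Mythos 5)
This theorem is not proved in the paper at all: it is quoted from \cite{HJPS2014}, and the paper's introduction explicitly flags that ``the classical Banach contraction mapping principle cannot be applied directly to this equation due to the existence of $K$''; the proof cited there instead solves \eqref{GBSDE} first for cylinder data $\xi=\varphi(B_{t_1},\dots,B_{t_n})$ and cylinder generators, by constructing $(Y,Z,K)$ from a classical solution of the associated fully nonlinear parabolic PDE (Krylov's $C^{1+\alpha/2,2+\alpha}$ regularity), and then passes to general $(\xi,f,g)$ by a Galerkin-type approximation glued together with a priori estimates. Your proposal is precisely the Picard/contraction route that this remark rules out, so the whole weight of your argument rests on the fixed-point step --- and that step genuinely fails.

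Here is the concrete gap. Fix two frozen inputs and let $(\hat Y,\hat Z,\hat K)$ be the difference of the corresponding decoupled solutions. The process $\hat K=K-K'$ is a difference of two non-increasing $G$-martingales: it has no sign, and by sublinearity of $\mathbb{\hat{E}}_t$ it is not a $G$-martingale either, so when you apply the $G$-It\^o formula to $|\hat Y|^2$ the cross term $\mathbb{\hat{E}}[\,|\int_{T-\delta}^T \hat Y_s\, d\hat K_s|\,]$ survives and can only be bounded by $\|\hat Y\|_{S^2_G}\,(\|K_T\|_{L^2_G}+\|K'_T\|_{L^2_G})$. This is \emph{linear}, not quadratic, in $\hat Y$, so the resulting stability estimate is $\|\hat Z\|_{M^2_G}\leq C\,\|\hat Y\|_{S^2_G}^{1/2}$ with $C$ proportional to $\|K_T\|_{L^2_G}+\|K'_T\|_{L^2_G}$: the map $\Phi$ is only $1/2$-H\"older in its $z$-input, and H\"older-$1/2$ bounds do not iterate into a Cauchy sequence. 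Worse, the coefficient $C$ cannot be made small by shrinking the interval: on $[T-\delta,T]$ with $f=g=0$ and $\xi=-|B_T-B_{T-\delta}|^2/\delta$, one computes $Y_t=-\frac{1}{\delta}|B_t-B_{T-\delta}|^2-\frac{\underline{\sigma}^2(T-t)}{\delta}$ and $K_T-K_{T-\delta}=\underline{\sigma}^2-\frac{1}{\delta}(\langle B\rangle_T-\langle B\rangle_{T-\delta})$, which is of order $\bar{\sigma}^2-\underline{\sigma}^2$ while $\|\xi\|_{L^2_G}\leq \sqrt{3}\,\bar{\sigma}^2$ stays bounded; so data of bounded norm produce order-one $K$'s on arbitrarily short intervals, and no $\delta$ ``depending only on $L_0$ and the ellipticity constants'' restores a contraction. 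Two smaller points: your stage one (the decoupled case) is essentially fine, but only modulo the deep decomposition theorem for $G$-martingales of \cite{Song11,STZ}, so your closing paragraph misplaces the difficulty --- the decoupled case is available off the shelf, while the step you treat as routine is the true obstruction; and your uniqueness sketch can be repaired as in \cite{HJPS2014}, since there the square-root estimate suffices once $\hat\xi=0$ forces $\hat Y\equiv 0$, but it cannot be bootstrapped into existence.
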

\begin{theorem}[\cite{HJPS2014C}]\label{my2}
Assume $(\xi^{\nu},f^{\nu},g^{\nu}_{ij})$ satisfy assumption $\textbf{(A1)}$  for $\nu=1,2$.
Moreover,  one of them  satisfies assumption $\textbf{(A2)}$.
Suppose $(Y^{\nu},Z^{\nu},K^{\nu})$ is a $\mathfrak{S}_G^{2}(0,T)$-solution to the $G$-BSDE \eqref{GBSDE} with data $(\xi^{\nu},f^{\nu},g^{\nu}_{ij})$.
 If $\xi^2\leq \xi^1,$ $f^2\leq f^1$ and the matrix $(g^2_{ij})_{i,j=1}^d\leq (g^1_{ij})_{i,j=1}^d$, then we have $Y_t^2\leq Y_t^1$ for all $t\in [0,T].$
\end{theorem}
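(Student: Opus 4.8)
The plan is to reduce the comparison to the positivity of the $Y$-component of a single \emph{linear} $G$-BSDE obtained by linearising the difference of the two equations, and then to read off that positivity from a representation formula. Write $\hat Y=Y^1-Y^2$, $\hat Z=Z^1-Z^2$, $\hat K=K^1-K^2$, and $\hat\xi=\xi^1-\xi^2\ge0$. Subtracting the two instances of \eqref{GBSDE}, the triple $(\hat Y,\hat Z,\hat K)$ solves a $G$-BSDE whose driver is the difference of the two generators. Assuming (without loss of generality, the other case being symmetric) that $(\xi^1,f^1,g^1_{ij})$ is the datum satisfying \textbf{(A2)}, I would decompose
$$f^1(s,Y^1_s,Z^1_s)-f^2(s,Y^2_s,Z^2_s)=\big[f^1(s,Y^1_s,Z^1_s)-f^1(s,Y^2_s,Z^2_s)\big]+\big[f^1(s,Y^2_s,Z^2_s)-f^2(s,Y^2_s,Z^2_s)\big],$$
and analogously for each $g^{ij}$. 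By \textbf{(A2)} the first bracket equals $a_s\hat Y_s+b_s\cdot\hat Z_s$ with adapted $\{a_s\},\{b_s\}$ bounded by $L_0$ (difference quotients), while the second bracket is $m_s:=(f^1-f^2)(s,Y^2_s,Z^2_s)\ge0$ because $f^2\le f^1$. The same device applied to $g$ produces bounded coefficients $c^{ij}_s,d^{ij}_s$ and a remainder matrix $(n^{ij}_s):=(g^1_{ij}-g^2_{ij})(s,Y^2_s,Z^2_s)\ge0$ (positive semidefinite) from the matrix ordering hypothesis.

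First I would confirm both source terms are nonnegative after integration. The $ds$-source gives $\int_t^T m_s\,ds\ge0$ directly. For the $d\langle B\rangle$-source, recall $G(A)=\tfrac12\sup_{Q\in\Gamma}\mathrm{tr}[AQ]$ with $\Gamma\subset\mathbb{S}^+(d)$, so under each $P\in\mathcal P$ the density of $d\langle B^i,B^j\rangle_s$ lies in $\Gamma$; hence $\int_t^T n^{ij}_s\,d\langle B^i,B^j\rangle_s$ is an integral of $\mathrm{tr}[(n^{ij}_s)Q_s]\ge0$ with $(n^{ij}_s)\ge0$ and $Q_s\ge0$, and so is nonnegative. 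This is exactly where the matrix ordering on $g$ is needed. Thus $\hat Y$ solves a linear $G$-BSDE with nonnegative terminal value and nonnegative sources.

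Next I would remove the $\hat Z$- and $\hat Y$-dependence. Using the Girsanov transformation for $G$-BSDEs from \cite{HJPS2014C}, the terms $b_s\cdot\hat Z_s$ and $d^{ij}_s\hat Z_s$ are absorbed into a new $G$-Brownian motion $\tilde B$ under an equivalent sublinear expectation $\tilde{\mathbb E}$, so that $\int_t^T\hat Z_s\,dB_s$ becomes $\int_t^T\hat Z_s\,d\tilde B_s$; the linear-in-$\hat Y$ coefficients $a_s,c^{ij}_s$ are then eliminated by the strictly positive integrating factor $R_t:=\exp\big(\int_0^t a_s\,ds+\int_0^t c^{ij}_s\,d\langle B^i,B^j\rangle_s\big)$ via the $G$-It\^o formula (the cross variation vanishes since $R$ has finite variation). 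After these reductions $R\hat Y$ solves a $G$-BSDE driven by $\tilde B$ whose driver no longer depends on $(\hat Y,\hat Z)$, and integrating from $t$ to $T$ yields
$$R_t\hat Y_t=\tilde{\mathbb E}_t\Big[R_T\hat\xi+\int_t^T R_s m_s\,ds+\int_t^T R_s n^{ij}_s\,d\langle B^i,B^j\rangle_s-\int_t^T R_s\hat Z_s\,d\tilde B_s-\int_t^T R_s\,d\hat K_s\Big].$$
If the martingale part $\int_t^T R_s\hat Z_s\,d\tilde B_s+\int_t^T R_s\,d\hat K_s$ were a genuine $G$-martingale, its conditional $G$-expectation increment would vanish, the right-hand side would reduce to $\tilde{\mathbb E}_t[\,\text{nonnegative data}\,]\ge0$, and $R_t>0$ would give $\hat Y_t\ge0$, i.e. $Y^2_t\le Y^1_t$ for all $t$.

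The hard part is precisely this last step. Here the linearisation forces $K^1$ and $K^2$ to enter only through their difference $\hat K=K^1-K^2$, which is a \emph{non-symmetric} $G$-martingale and, crucially, is not monotone: the contribution $-\int_t^T R_s\,dK^1_s\ge0$ works in our favour, but $\int_t^T R_s\,dK^2_s\le0$ works against us, so $\int R\,d\hat K$ is only a $G$-submartingale, and a crude appeal to subadditivity of $\tilde{\mathbb E}_t$ only bounds $R_t\hat Y_t$ in the wrong direction. This is the essential obstacle — it is absent in the classical case of \cite{LM}, where $K\equiv0$ and the pathwise sign argument closes immediately. Overcoming it requires handling the combined martingale term as a whole, exploiting the nonlinearity of the conditional $G$-expectation and the fact that $K^2$ is tied to $Y^2$ being a supermartingale under every $P\in\mathcal P$; this is exactly what the representation and linearisation machinery of \cite{HJPS2014C} and \cite{HW} supplies, and I would invoke it to conclude $\hat Y_t\ge0$ for all $t$.
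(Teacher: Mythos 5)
Your linearisation, the sign analysis of the two source terms (including the correct pairing of the matrix ordering on $g$ with the positive semidefinite density of $d\langle B^i,B^j\rangle_s$), and the reduction to a linear equation are all sound, and you have located the crux accurately: $\hat K=K^1-K^2$ is a difference of two non-increasing $G$-martingales and is therefore not a $G$-martingale, so its conditional $G$-expectation cannot be discarded. But at exactly that point your argument ceases to be a proof. Invoking ``the representation and linearisation machinery of \cite{HJPS2014C}'' to conclude is circular: the statement you are proving \emph{is} the comparison theorem of \cite{HJPS2014C} (the present paper also imports it without proof), and no amount of analysing $\int_t^T R_s\,d\hat K_s$ ``as a whole'' will close the argument, because the correct resolution is to never form $\hat K$ in the first place. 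A secondary inaccuracy: there is no ``equivalent sublinear expectation'' Girsanov transform absorbing $b_s\cdot\hat Z_s\,ds$ into $B$ here, precisely because $ds$ and $d\langle B\rangle_s$ are mutually singular; what replaces it is the enlargement to the $\tilde G$-space \eqref{yw7}, where the auxiliary process $\tilde B$ satisfies $d\langle B,\tilde B\rangle_s=ds$ and the dual process $\tilde\Gamma$ of \eqref{LSDE2} does the absorbing.

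The missing device — the same one this paper uses in equation \eqref{eq5} in the proof of Lemma \ref{lemma2}(iii) — is to keep only $K^1$ as the non-increasing $G$-martingale of the linearised equation and to absorb $K^2$ into the unknown and the data. With your notation ($a,b,c^{ij},d^{ij}$ bounded by $L_0$, $m_s=(f^1-f^2)(s,Y^2_s,Z^2_s)\geq 0$, $(n^{ij}_s)=(g^1_{ij}-g^2_{ij})(s,Y^2_s,Z^2_s)\geq 0$), set $\tilde Y_t:=\hat Y_t+K^2_t$ and substitute $\hat Y_s=\tilde Y_s-K^2_s$ in the linear terms. Then $(\tilde Y,\hat Z,K^1)$ solves the linear $G$-BSDE \eqref{1LBSDE1} with terminal value $\hat\xi+K^2_T$ and sources $m_s-a_sK^2_s$ and $n^{ij}_s-c^{ij}_sK^2_s$; note $K^1$ enters with exactly the right sign to serve as that equation's own non-increasing $G$-martingale, so Lemma \ref{the5.2} applies and gives
\[
\hat Y_t+K^2_t=\mathbb{\hat{E}}_{t}^{\tilde G}\Big[\tilde\Gamma^t_T(\hat\xi+K^2_T)+\int_t^T(m_s-a_sK^2_s)\tilde\Gamma^t_s\,ds+\int_t^T(n^{ij}_s-c^{ij}_sK^2_s)\tilde\Gamma^t_s\,d\langle B^i,B^j\rangle_s\Big].
\]
Since $\tilde\Gamma^t_s>0$, $\hat\xi\geq0$, $m_s\geq0$ and $\int_t^T n^{ij}_s\tilde\Gamma^t_s\,d\langle B^i,B^j\rangle_s\geq0$, monotonicity of $\mathbb{\hat{E}}_{t}^{\tilde G}$ yields
\[
\hat Y_t+K^2_t\;\geq\;\mathbb{\hat{E}}_{t}^{\tilde G}\Big[\tilde\Gamma^t_TK^2_T-\int_t^Ta_sK^2_s\tilde\Gamma^t_s\,ds-\int_t^Tc^{ij}_sK^2_s\tilde\Gamma^t_s\,d\langle B^i,B^j\rangle_s\Big]=K^2_t,
\]
the last equality being precisely identity \eqref{yw2} applied to the non-increasing $G$-martingale $K^2$. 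Hence $\hat Y_t\geq0$. Identity \eqref{yw2} is the nontrivial content you were missing: it encodes the fact that a non-increasing $G$-martingale retains zero conditional expectation after being paired with the positive dual process and the symmetric martingale terms, and the absorption $\hat Y\mapsto\hat Y+K^2$ is what places $K^2$ where that identity can be used.
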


The     linear   $G$-BSDEs will be repeatedly used in our paper, so we sketch the idea on how to construct the solution. Consider linear $G$-BSDE of the form,
\begin{equation}\label{1LBSDE1}
 Y_t=\xi +\int_t^T [a_{s}Y_{s}+b_{s}Z_{s}+m_{s}]ds+\int_{t}^{T}[c_{s}^{ij}Y_{s}+d_{s}^{ij}Z_{s}+n^{ij}_{s}]d\langle B^i,B^j\rangle_{s}-\int_t^T  Z_s dB_s- (K_T-K_t),
 \end{equation}
  where $(a_{s})_{s\in\lbrack0,T]}$,
$(c^{ij}_{s})_{ s\in\lbrack0,T]}\in M_{G}^{2}(0,T)$, $(b_{s})_{s\in\lbrack0,T]}$,$(d^{ij}_{s})_{s\in\lbrack0,T]}\in M_{G}^{2}(0,T;\mathbb{R}^d)$ are bounded
processes and $\xi\in L_{G}^{2}(\Omega_{T})$,
$(m_{s})_{s\in\lbrack0,T]}$, $(n^{ij}_{s})_{s\in\lbrack0,T]}\in M_{G}^{2}(0,T)$.

To find the closed-form solution  to equation (\ref{1LBSDE1}), a standard method is to introduce a dual process. However for  the $G$-expectation case, unless the $G$-Brownian motion degenerates to the standard Wiener process,  the measures $ds$ and $d\langle B\rangle_{s}$ are mutually singular, therefore to cancel terms involving  $ds$ and $d\langle B^i,B^j\rangle_{s}$  is even harder.
To adapt the classical dual method, Hu et al. \cite{HJPS2014C} came up with a strategy of enlarging the original $G$-expectation space to
$\tilde{G}$-expectation space $(\tilde{\Omega},L_{\tilde{G}}^{1}
(\tilde{\Omega}),\mathbb{\hat{E}}^{\tilde{G}})$ with $\tilde{\Omega}=C_{0}([0,\infty),\mathbb{R}^{2d})$ and
\begin{align}\label{yw7}
\tilde{G}(A)=\frac{1}{2}\sup_{Q\in\Gamma}\mathrm{tr}\left[  A\left[
\begin{array}
[c]{cc}
Q & I_d\\
I_d & Q^{-1}
\end{array}
\right]  \right]  ,\ A\in\mathbb{S}_{2d}.
\end{align}
Let $(B_{t},\tilde{B}_{t})_{t\geq 0}$ be the canonical process in the extended space. Then
\begin{lemma}[\cite{HJPS2014C}]
\label{the5.2} In the extended $\tilde{G}$-expectation space, the solution of
the linear $G$-BSDE (\ref{1LBSDE1}) can be represented as
\begin{equation*}
Y_{t}=\mathbb{\hat{E}}_{t}^{\tilde{G}}[\tilde{\Gamma}_{T}^t\xi+\int_{t}^{T}
m_{s}\tilde{\Gamma}_s^tds+\int_{t}^{T}n_{s}^{ij}\tilde{\Gamma}^t_sd\langle B^i,B^j\rangle_{s}],
\end{equation*}
where $\{\tilde{\Gamma}^t_s\}_{s\in\lbrack t,T]}$ is the solution of the  following $\tilde{G}$-SDE:\begin{equation}
\tilde{\Gamma}^t_s=1+\int_{t}^{s}a_{r}\tilde{\Gamma}^t_rdr+\int_{t}^{s}c^{ij}_{r}\tilde{\Gamma}^t_rd\langle B^i, B^j\rangle
_{r}+\int_{t}^{s}d_{r}^{ij}\tilde{\Gamma}^t_rdB_{r}+\int_{t}^{s}b_{r}\tilde{\Gamma}^t_{r}d\tilde{B}_{r}.
\label{LSDE2}
\end{equation}
Moreover,
\begin{equation}\label{yw2}
\mathbb{\hat{E}}_{t}^{\tilde{G}}[\tilde{\Gamma}^t_TK_{T}-\int_{t}^{T}a_{s}K_{s}\tilde{\Gamma}^t_s
ds-\int_{t}^{T}c^{ij}_{s}K_{s}\tilde{\Gamma}^t_sd\langle
B^i, B^j\rangle_{s}]=K_{t}.
\end{equation}
\end{lemma}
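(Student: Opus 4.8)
The plan is to transplant the classical adjoint (dual process) method for linear BSDEs into the $G$-framework. The only genuine obstruction is that the $ds$-coefficients $a_sY_s,b_sZ_s,m_s$ and the $d\langle B^i,B^j\rangle_s$-coefficients live against mutually singular measures, so an adjoint process driven by $B$ alone cannot cancel both groups at once. This is exactly what the passage to the $\tilde G$-space remedies: with $\tilde G$ given by \eqref{yw7}, the off-diagonal blocks $I_d$ force the cross-bracket $d\langle B^i,\tilde B^j\rangle_s=\delta_{ij}ds$ to be \emph{deterministic}, so the auxiliary Brownian motion $\tilde B$ supplies an integrator whose covariation against $B$ reproduces $ds$. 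First I would record that the linear $\tilde G$-SDE \eqref{LSDE2} has a strictly positive solution $\tilde\Gamma^t$ (it is a stochastic exponential) carrying enough integrability, thanks to the boundedness of $a,b,c,d$ and to assumption (A1) with $\beta>2$, for every product and integral below to sit in $L^1_G$.

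Next I would apply the $G$-It\^o product rule to $\tilde\Gamma^t_sY_s$ on $[t,T]$. Writing $\Gamma=\tilde\Gamma^t$, the $a_s\Gamma_sY_s\,ds$ and $c^{ij}_s\Gamma_sY_s\,d\langle B^i,B^j\rangle_s$ terms coming from $Y\,d\Gamma$ cancel their counterparts from $\Gamma\,dY$. The two $Z$-terms are removed through the covariation $d\langle\Gamma,Y\rangle_s$: crossing the $\int d^{ij}_r\Gamma_r\,dB_r$ part of $\Gamma$ with $\int Z_r\,dB_r$ produces $d^{ij}_s\Gamma_sZ_s\,d\langle B^i,B^j\rangle_s$, killing the coefficient $d^{ij}_sZ_s$, while crossing the $\int b_r\Gamma_r\,d\tilde B_r$ part with $\int Z_r\,dB_r$ yields $b_s\Gamma_sZ_s\,d\langle\tilde B,B\rangle_s=b_s\Gamma_sZ_s\,ds$, killing the $ds$-coefficient $b_sZ_s$; since $K$ has finite variation it contributes nothing to $d\langle\Gamma,Y\rangle_s$. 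Setting $P_s:=\Gamma_sY_s+\int_t^s\Gamma_rm_r\,dr+\int_t^s\Gamma_rn^{ij}_r\,d\langle B^i,B^j\rangle_r$, the surviving terms give $dP_s=\Gamma_s\,dK_s+d\tilde M_s$, where $\tilde M$ gathers the $dB$- and $d\tilde B$-integrals and is a symmetric $\tilde G$-martingale. Using $\Gamma_t=1$ and $Y_T=\xi$ this yields the pathwise identity $\Gamma_T\xi+\int_t^T\Gamma_sm_s\,ds+\int_t^T\Gamma_sn^{ij}_s\,d\langle B^i,B^j\rangle_s=Y_t+\int_t^T\Gamma_s\,dK_s+(\tilde M_T-\tilde M_t)$.

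It remains to take $\mathbb{\hat{E}}^{\tilde{G}}_{t}$ of this identity, for which two facts are needed. First, a symmetric $\tilde G$-martingale increment may be discarded: since $\mathbb{\hat{E}}^{\tilde{G}}_{t}[\pm(\tilde M_T-\tilde M_t)]=0$, sub-additivity applied in both directions gives $\mathbb{\hat{E}}^{\tilde{G}}_{t}[X+(\tilde M_T-\tilde M_t)]=\mathbb{\hat{E}}^{\tilde{G}}_{t}[X]$ for every $X$. Second, and this is the heart of the matter, one must show
\[
\mathbb{\hat{E}}^{\tilde{G}}_{t}\Big[\int_t^T\Gamma_s\,dK_s\Big]=0 .
\]
Granting this, pulling the $\mathcal F_t$-measurable $Y_t$ out of $\mathbb{\hat{E}}^{\tilde{G}}_{t}$ and discarding $\tilde M$ turns the pathwise identity into precisely the asserted representation of $Y_t$. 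To prove the displayed equality I would use the positivity of $\Gamma$ together with the fact that $K$ is itself a $\tilde G$-martingale: for a simple nonnegative integrand $\Gamma=\sum_i\gamma_{t_i}\mathbf 1_{[t_i,t_{i+1})}$ with $\gamma_{t_i}\ge0$ and $\mathcal F_{t_i}$-measurable, each step satisfies $\mathbb{\hat{E}}^{\tilde{G}}_{t_i}[\gamma_{t_i}(K_{t_{i+1}}-K_{t_i})]=\gamma_{t_i}\,\mathbb{\hat{E}}^{\tilde{G}}_{t_i}[K_{t_{i+1}}-K_{t_i}]=0$, the positivity of $\gamma_{t_i}$ being exactly what lets it pass through the sublinear expectation; iterating backward through the partition via the tower property gives the equality for simple $\Gamma$, and a density argument extends it to our $\Gamma$. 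The main obstacle is this last passage to the limit: controlling $\int_t^T(\Gamma^n_s-\Gamma_s)\,dK_s$ as $\Gamma^n\downarrow$-approximates $\Gamma$ by nonnegative simple processes, where the positivity of $\Gamma$ and the integrability $K_T\in L^2_G$ granted by $\beta>2$ are essential. Finally, applying the product rule to $\Gamma_sK_s$ (no bracket term, $K$ being of finite variation) and taking $\mathbb{\hat{E}}^{\tilde{G}}_{t}$ reduces identity \eqref{yw2} to the very same fact $\mathbb{\hat{E}}^{\tilde{G}}_{t}[\int_t^T\Gamma_s\,dK_s]=0$, so \eqref{yw2} follows at once.
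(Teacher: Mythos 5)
Your proposal is correct and is essentially the proof that this paper defers to: Lemma \ref{the5.2} is quoted from \cite{HJPS2014C} without proof, and your route---the $G$-It\^{o} product rule on $\tilde{\Gamma}^t_sY_s$ exploiting the deterministic cross-variation $d\langle B^i,\tilde{B}^j\rangle_s=\delta_{ij}ds$, the two-sided subadditivity argument for discarding symmetric $\tilde{G}$-martingale increments, and the reduction of both the representation formula and \eqref{yw2} to the single fact $\mathbb{\hat{E}}_{t}^{\tilde{G}}[\int_t^T\tilde{\Gamma}^t_s\,dK_s]=0$, proved via nonnegative simple integrands, positive homogeneity and the tower property---is exactly the argument of that reference. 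The two points you leave as sketches are standard and fillable: $K$ remains a $\tilde{G}$-martingale in the extended space because $\tilde{G}$ restricted to the first $d$ coordinates coincides with $G$, and the passage from simple to general nonnegative integrands follows from $|\int_t^T(\Gamma^n_s-\Gamma_s)\,dK_s|\leq \sup_{s\in[t,T]}|\Gamma^n_s-\Gamma_s|\,|K_T|$ together with H\"{o}lder's inequality and the integrability of $K_T$ guaranteed by $\beta>2$.
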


\section{$G$-BSDEs with uniformly continuous generators}
In this section, we  shall investigate  the well-posedness problem of the subsequent $G$-BSDEs
\begin{equation}\label{GBSDE1}
 Y_t=\xi +\int_t^T f(s,Y_s,Z_s)ds+\int_t^T g^{ij}(s,Y_s,Z_s)d\langle B^i,B^j\rangle_s-\int_t^T  Z_s dB_s- (K_T-K_t),
 \end{equation}
 where the generators
\begin{equation*}
f(t,\omega,y,z),\  g^{ij}(t,\omega,y,z):[0,T]\times \Omega\times \mathbb{R}\times\mathbb{R}^d\rightarrow
\mathbb{R},
\end{equation*}
satisfy the following assumptions:
\begin{description}
\item[(H1)]  There exists a constant $ \beta >2 $ such that $f(\cdot, \cdot,y,z), g(\cdot,\cdot,y,z) \in M_G^\beta(0,T)$  for any $ y,z$.
\item[(H2)]   $f$ {and} $ g $ are Lipschitz continuous in $ y$, are of linear growth and  uniformly
 continuous in $ z$, i.e. there is a constant $ L$ and a continuous function $\phi$,  both independent of $(t,\omega)$,  such that
   $$ \vert f(t,\omega,y,z)-f(t,\omega,y',z')\vert+\vert g^{ij}(t,\omega,y,z)-g^{ij}(t,\omega,y',z')\vert\leq L\vert y-y'\vert+\phi(\vert z-z'\vert), $$
   where $\phi:\mathbb{R^+}\rightarrow \mathbb{R^+}$ is nondecreasing and sub-additive, with $\phi(0)=0$ as well as
   $\phi(z)\leq L(1+\vert z\vert).$
\item[(H3)]  $g^{ij}\equiv 0$ whenever $i\neq j$.
\end{description}

\begin{remark}
{\upshape
Note that assumption (H3) is necessary to construct a sequence of $G$-BSDEs monotonically converges to $Y$, see (i) of Lemma \ref{lemma2}.
}
\end{remark}

According to  Lemma 1 in Lepeltier and Martin \cite{LM} or Lemma 2 in Jia \cite{J},
there exists a sequence of  Lipschitz functions that nicely approximates $f$ and $g_{ij}$ respectively. Indeed,
for  any $(t,y,z),n\in\mathbb{N}$ and for every $\omega$, denote
\begin{align*}
\underline{\varphi}_n(t,y,z):=\inf\limits_{q\in\mathbb{Q}}\{\varphi(t,y,q)+n\vert z-q\vert\}-\varphi_0(t),\ \
\bar{\varphi}_n(t,y,z):=\sup\limits_{q\in\mathbb{Q}}\{\varphi(t,y,q)-n\vert z-q\vert\}-\varphi_0(t),
\end{align*}
where $\varphi=f, g^{ij}$ and $\varphi_0(t)=\varphi(t,0,0)$. Their main technical lemma can be summarized  as,
\begin{lemma}
\label{lemma1}
 Assume \textbf{(H1)-(H2)} hold. Then  for each $n>L$, the following properties hold
 \begin{description}
\item[(i)]  both $ \underline{\varphi}_n$ and $\bar{\varphi}_n$ are of linear growth, moreover, for all $(t,y,z)$,
    $$ -L(1+\vert y\vert+\vert z\vert)\leq \underline{\varphi}_n(t,y,z)\leq \varphi(t,y,z)-\varphi_0(t)\leq \bar{\varphi}_n(t,y,z) \leq  L(1+\vert y\vert+\vert z\vert);$$
\item[(ii)]  for all $(t,y,z)$,   $\underline{\varphi}_.(t,y,z)$ is  non-decreasing and $\bar{\varphi}_.(t,y,z)$ is non-increasing;
\item[(iii)]$\underline{\varphi}_n(t,y,\cdot)$ and $\bar{\varphi}_n(t,y,\cdot)$ are Lipschitz functions with constant $n$,
$\underline{\varphi}_n(t,\cdot,z)$ and $\bar{\varphi}_n(t,\cdot,z)$ are Lipschitz functions with Lipschitz constant $L$;
\item[(iv)] if $(y_n,z_n)\rightarrow (y,z),$  then $\underline{\varphi}_n(t,y_n,z_n)\rightarrow \varphi(t,y,z)-\varphi_0(t)$ and $\bar{\varphi}_n(t,y_n,z_n)\rightarrow \varphi(t,y,z)-\varphi_0(t)$;
\item[(v)] for all $(t,\omega,y,z)$,
$$0\leq \varphi(t,y,z)-\varphi_0(t)-\underline{\varphi}_n(t,y,z)\leq \phi(\frac{2L}{n-L}) , 0\leq \bar{\varphi}_n(t,y,z)+\varphi_0(t)-\varphi(t,y,z)\leq \phi(\frac{2L}{n-L}). $$
\end{description}
\end{lemma}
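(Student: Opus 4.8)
The plan is to treat $\underline{\varphi}_n$ and $\bar{\varphi}_n$ as the inf- and sup-convolution (Lipschitz regularization) of the shifted generator $\psi:=\varphi-\varphi_0$, where $\psi(t,y,z)=\varphi(t,y,z)-\varphi_0(t)$, so that $\underline{\varphi}_n(t,y,z)=\inf_{q\in\mathbb{Q}}\{\psi(t,y,q)+n|z-q|\}$ and $\bar{\varphi}_n(t,y,z)=\sup_{q\in\mathbb{Q}}\{\psi(t,y,q)-n|z-q|\}$. From (H2) the map $\psi(t,\cdot,\cdot)$ is $L$-Lipschitz in $y$, has modulus $\phi$ in $z$, and satisfies $|\psi(t,y,z)|\le L(1+|y|+|z|)$ since $\phi(r)\le L(1+r)$. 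I would first settle the two outer bounds of (i), which also guarantees the inf/sup are finite. The sandwich $\underline{\varphi}_n\le\psi\le\bar{\varphi}_n$ is obtained by evaluating the inf/sup along rationals $q\to z$ and using continuity in $z$. For the lower bound on $\underline{\varphi}_n$, estimate for every $q$,
\[
\psi(t,y,q)+n|z-q|\ge -L(1+|y|+|q|)+n|z-q|\ge -L(1+|y|+|z|)+(n-L)|z-q|\ge -L(1+|y|+|z|),
\]
using $|q|\le|z|+|z-q|$ and $n>L$; the upper bound on $\bar{\varphi}_n$ is symmetric.

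Items (ii) and (iii) are then the elementary calculus of these regularizations. Monotonicity in $n$ holds because $n|z-q|$ is nondecreasing in $n$ for each fixed $q$, so the defining infimum increases and the supremum decreases. The Lipschitz constant $n$ in $z$ follows since each $z\mapsto\psi(t,y,q)\pm n|z-q|$ is $n$-Lipschitz and an infimum (respectively supremum) of a uniformly $n$-Lipschitz family is $n$-Lipschitz; the Lipschitz constant $L$ in $y$ follows by inserting $\psi(t,y,q)\le\psi(t,y',q)+L|y-y'|$ into the infimum and taking the symmetric estimate.

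The heart of the argument, and the only step needing care, is the explicit modulus (v). I would reduce it to the one–variable inequality
\[
nr-\phi(r)\ge -\phi\Big(\tfrac{2L}{n-L}\Big)\qquad\text{for all } r\ge 0,
\]
proved by splitting at $r_0:=2L/(n-L)$: for $r\le r_0$ monotonicity of $\phi$ gives $nr-\phi(r)\ge-\phi(r)\ge-\phi(r_0)$, while for $r>r_0$ the bound $\phi(r)\le L(1+r)$ gives $nr-\phi(r)\ge(n-L)r-L>L>0$. Combining this with the uniform-continuity estimate $\psi(t,y,q)\ge\psi(t,y,z)-\phi(|z-q|)$ yields $\underline{\varphi}_n(t,y,z)\ge\psi(t,y,z)-\phi(r_0)$, which together with $\underline{\varphi}_n\le\psi$ gives the claimed two-sided bound; the estimate for $\bar{\varphi}_n$ is identical. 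Finally (iv) follows immediately from (v): since
\[
|\underline{\varphi}_n(t,y_n,z_n)-\psi(t,y,z)|\le\phi\Big(\tfrac{2L}{n-L}\Big)+|\psi(t,y_n,z_n)-\psi(t,y,z)|,
\]
the first term vanishes as $n\to\infty$ by continuity of $\phi$ at the origin and the second by continuity of $\psi$, and symmetrically for $\bar{\varphi}_n$. All constants being deterministic and independent of $(t,\omega)$, the bounds are uniform, which is the property actually needed in the sequel; the only genuine obstacle is pinning down the exact constant $2L/(n-L)$ in (v), which the above splitting supplies.
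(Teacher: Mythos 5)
Your proposal is correct, and it is worth noting that the paper itself does not prove this lemma at all: it is quoted as a summary of Lemma 1 in Lepeltier--San Martin \cite{LM} and Lemma 2 in Jia \cite{J,J2010}, so your write-up supplies an argument the paper delegates to references. Your treatment of (i)--(iii) is the standard inf-/sup-convolution calculus and matches those references: the outer bounds via $|q|\le |z|+|z-q|$ and $n>L$, the sandwich by letting rational $q\to z$, monotonicity in $n$ termwise, and the Lipschitz constants by stability of inf/sup under uniformly Lipschitz families. Where you genuinely diverge is in (v): the classical route (Jia) picks an $\varepsilon$-minimizer $q_\varepsilon$, shows $(n-L)|z-q_\varepsilon|\le L+\varepsilon$ so that near-optimal points are localized within roughly $2L/(n-L)$ of $z$, and then invokes the modulus $\phi$; you instead prove the pointwise one-variable inequality $nr-\phi(r)\ge -\phi\bigl(\tfrac{2L}{n-L}\bigr)$ for all $r\ge 0$ by splitting at $r_0=2L/(n-L)$, and feed it directly into $\psi(t,y,q)+n|z-q|\ge \psi(t,y,z)-\phi(|z-q|)+n|z-q|$ before taking the infimum. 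This is cleaner (no $\varepsilon$-bookkeeping, no selection of near-minimizers) and yields the same constant; deriving (iv) as an immediate corollary of (v) is also a slight economy over \cite{LM}, where the stability property is argued separately. The one point you leave implicit, which is the actual reason the infimum is taken over $q\in\mathbb{Q}$ rather than $q\in\mathbb{R}^d$, is measurability of $\underline{\varphi}_n,\bar{\varphi}_n$ in $(t,\omega)$ as countable infima/suprema; this is not asserted in the lemma (the paper handles integrability separately via (H4)), so its absence is not a gap in what you were asked to prove.
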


Based on the above  approximation results, we  construct two sequences of $G$-BSDEs  corresponding respectively to $(\underline{f}_n,\underline{g}_{n}^{ij})$ and $(\bar{f}_n,\bar{g}_{n}^{ij})$, i.e.,
\begin{align}\label{my1}
\begin{split}
\underline{Y}^n_t=&\xi +\int_t^T [\underline{f}_n(s,\underline{Y}^n_s, \underline{Z}^n_s)+f_0(s)]ds+\int_t^T[ \underline{g}^{ij}_n(s,\underline{Y}^n_s, \underline{Z}^n_s)+g^{ij}_0(s)]d\langle B^i,B^j\rangle_s -\int_t^T  \underline{Z}^n_s dB_s\\
& \ \ \ \ \ - (\underline{K}^n_T-\underline{K}^n_t),\\
 \bar{Y}^n_t=&\xi +\int_t^T [\bar{f}_n(s,\bar{Y}^n_s, \bar{Z}^n_s)+f_0(s)]ds+\int_t^T [\bar{g}_n^{ij}(s,\bar{Y}^n_s, \bar{Z}^n_s)+g^{ij}_0(s)]d\langle B^i,B^j\rangle_s-\int_t^T  \bar{Z}^n_s dB_s \\
& \ \ \ \ \ -(\bar{K}^n_T-\bar{K}^n_t).
\end{split}
\end{align}

We need an additional assumption to ensure the existence of $\underline{Y}^n$ and $ \bar{Y}^n$:
\begin{description}
\item[(H4)] For each $n$ and for any $(y,z)$, $\underline{\varphi}_n(t,y,z)$ and $\bar{\varphi}_n(t,y,z)$ all belong to  $M^{\beta}_G(0,T)$, with $\varphi=f,g^{ij}$.
\end{description}

\begin{remark}
{\upshape As can be easily seen, assumption \textbf{(H4)} is imposed mainly to keep all processes under investigation lying in   space $M^{\beta}_G(0,T)$.
This condition can be verified for lots of situations.  For instance, assume  \textbf{ (H1)} hold.
Suppose for $\varphi=f,g$ that $\varphi(\cdot, \cdot, y, z)$ is uniformly continuous in $(t, \omega)$ with the modulus of continuity
  independent of $(y, z)$,
\[
\vert \varphi(t,\omega,y,z)-\varphi(t',\omega',y,z)\vert\leq \phi(|t-t'|+\sup\limits_{s\in[0,t]}|\omega(s)-\omega'(s)|).
\]
Then it is straightforward  to observe that $\underline{\varphi}_n(\cdot,\cdot,y,z)$ and $\bar{\varphi}_n(\cdot,\cdot, y,z)$ are uniformly continuous in $(t,\omega)$.
Recalling the property (i) from  Lemma \ref{lemma1}, we have $\underline{\varphi}_n(\cdot,\cdot,y,z)$ and $\bar{\varphi}_n(\cdot,\cdot, y,z)$ are bounded  and then \[
\lim\limits_{N\rightarrow\infty}\mathbb{\hat{E}}[\int^T_0|\underline{\varphi}_n(t,y,z)|^{\beta}\mathbf{1}_{\{|\underline{\varphi}_n(t,y,z)|\geq N\}}dt]=0, \ \ \lim\limits_{N\rightarrow\infty}\mathbb{\hat{E}}[\int^T_0|\bar{\varphi}_n(t,y,z)|^{\beta}\mathbf{1}_{\{|\bar{\varphi}_n(t,y,z)|\geq N\}}dt]=0.
\]
Thus by Theorem 4.16 in \cite{HWZ2016}, we know $\varphi$ satisfies assumption \textbf{(H4)}.
}
\end{remark}

The following lemma is important in our future discussion.

\begin{lemma}\label{lemma2}
Let $\xi$ be in $L^{\beta}_G(\Omega_T)$ and  the assumptions \textbf{ (H1)-(H4)} hold. Then  the $G$-BSDE \eqref{my1} has a unique $\mathfrak{S}_G^{2}(0,T)$-solution. Moreover, we have
\begin{description}
\item[(i)]  for any $n,m\in\mathbb{N}$,   the comparisons $\underline{Y}^n\leq\underline{Y}^{n+1}\leq\bar{Y}^{m+1}\leq\bar{Y}^m$  hold;
\item[(ii)]  both $\underline{Y}^n$ and $\bar{Y}^n$ are uniformly bounded in $S_G^2(0,T)$;
\item[(iii)]  for each $n>L$,
the differences between $\underline{Y}^n$ and $\bar{Y}^n  $ can be uniformly controlled, that is,
$$\vert \underline{Y}^n_t-\bar{Y}^n_t\vert\leq C_G\phi(\frac{2L}{n-L}), \ \forall t\in[0,T],$$
where $C_G$ is a constant depending on $G,L$ and $T$.
\end{description}
\end{lemma}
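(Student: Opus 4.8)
The plan is to dispatch the well-posedness and assertions (i)--(ii) quickly from the Lipschitz theory and to concentrate the real work on (iii). For each fixed $n>L$, part (iii) of Lemma \ref{lemma1} makes $\underline\varphi_n,\bar\varphi_n$ (with $\varphi=f,g^{ij}$) Lipschitz in $(y,z)$, with constant $L$ in $y$ and $n$ in $z$, while \textbf{(H1)} and \textbf{(H4)} place the drivers $\underline f_n+f_0,\ \underline g_n^{ij}+g_0^{ij}$ and their barred analogues in $M_G^\beta(0,T)$ for every fixed $(y,z)$. Hence the data of each equation in \eqref{my1} satisfy \textbf{(A1)}--\textbf{(A2)}, and Theorem \ref{G1} yields a unique $\mathfrak S_G^2(0,T)$-solution. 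Here I would also record the role of \textbf{(H3)}: since $g^{ij}(t,0,0)=0$ and $g^{ij}\equiv0$ for $i\neq j$, the defining infimum and supremum collapse to $0$ off the diagonal, so the diffusion matrices in \eqref{my1} stay diagonal for every $n$. This is decisive, since it turns every matrix inequality demanded by the comparison theorem into scalar inequalities between diagonal entries.

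For (i) I would invoke the comparison theorem (Theorem \ref{my2}) three times, always with common terminal value $\xi$. Monotonicity, part (ii) of Lemma \ref{lemma1}, gives $\underline f_n\le\underline f_{n+1}$, $\underline g_n^{ii}\le\underline g_{n+1}^{ii}$ and $\bar f_{m+1}\le\bar f_m$, $\bar g_{m+1}^{ii}\le\bar g_m^{ii}$; the sandwich in part (i) of Lemma \ref{lemma1} gives $\underline f_n\le\bar f_m$ and $\underline g_n^{ii}\le\bar g_m^{ii}$ for all $n,m$. By the diagonal structure from \textbf{(H3)}, each entrywise inequality is exactly the matrix inequality required by Theorem \ref{my2}, and I conclude $\underline Y^n\le\underline Y^{n+1}$, $\bar Y^{m+1}\le\bar Y^m$ and $\underline Y^{n+1}\le\bar Y^{m+1}$, i.e. the whole chain.

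Assertion (ii) is then immediate: fixing the least admissible index $n_0>L$, the chain yields $\underline Y^{n_0}_t\le\underline Y^n_t,\bar Y^n_t\le\bar Y^{n_0}_t$ q.s. for all $t$ and all $n>L$, so $|\underline Y^n_t|\vee|\bar Y^n_t|\le|\underline Y^{n_0}_t|+|\bar Y^{n_0}_t|$ and the $S_G^2$-norms are bounded by $2(\|\underline Y^{n_0}\|_{S_G^2}^2+\|\bar Y^{n_0}\|_{S_G^2}^2)<\infty$, uniformly in $n$. For (iii) I turn to the linearization of \cite{HJPS2014C},\cite{HW}. Since $\underline Y^n\le\bar Y^n$, the nonnegative difference $\hat Y=\bar Y^n-\underline Y^n$ (with $\hat Z,\hat K$ defined analogously) solves a $G$-BSDE with zero terminal value whose driver I split as $[\underline\varphi_n(\cdot,\bar Y^n,\bar Z^n)-\underline\varphi_n(\cdot,\underline Y^n,\underline Z^n)]+[\bar\varphi_n-\underline\varphi_n](\cdot,\bar Y^n,\bar Z^n)$ for $\varphi=f$ and each diagonal $g^{ii}$. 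The first bracket linearizes into $a_s\hat Y_s+b_s\hat Z_s$ (resp.\ $c_s^{ii}\hat Y_s+d_s^{ii}\hat Z_s$) with $|a_s|,|c_s^{ii}|\le L$ and $|b_s|,|d_s^{ii}|\le n$; by (v) of Lemma \ref{lemma1} the second bracket is a source $m_s$ (resp.\ $n_s^{ii}$) with $0\le m_s,n_s^{ii}\le 2\phi(\tfrac{2L}{n-L})$. Thus $\hat Y$ solves a linear $G$-BSDE of the form \eqref{1LBSDE1}, and the representation of Lemma \ref{the5.2} — with the difference $\hat K=\bar K^n-\underline K^n$ of two non-increasing $G$-martingales absorbed through identity \eqref{yw2} — gives
\[0\le\hat Y_t\le\mathbb{\hat E}_t^{\tilde G}\Big[\int_t^T m_s\tilde\Gamma_s^t\,ds+\int_t^T n_s^{ii}\tilde\Gamma_s^t\,d\langle B^i\rangle_s\Big],\]
where $\tilde\Gamma^t$ is the positive solution of the adjoint $\tilde G$-SDE \eqref{LSDE2} built from $a,b,c^{ii},d^{ii}$.

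The main obstacle is that the linearized $z$-coefficients $b,d^{ii}$ grow like $n$, so the adjoint factor must be controlled uniformly in $n$. The saving observation is that $b$ and $d^{ii}$ enter \eqref{LSDE2} only through its stochastic-integral terms. Hence, taking the expectation $E_P$ in \eqref{LSDE2} under any representing measure $P$ of $\mathbb{\hat E}^{\tilde G}$ and using positivity of $\tilde\Gamma^t$, the $B$- and $\tilde B$-integrals are mean-zero $P$-martingales (after a standard localization) and only the drift coefficients $a,c^{ii}$, bounded by $L$, survive; together with $d\langle B^i\rangle_s\le\bar\sigma^2\,ds$ this yields by Gronwall $\mathbb{\hat E}_t^{\tilde G}[\tilde\Gamma_s^t]\le e^{C(s-t)}$ with $C=L(1+d\bar\sigma^2)$ \emph{independent of $n$}. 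Inserting this into the representation, using sub-additivity of $\mathbb{\hat E}^{\tilde G}$, $\sum_i d\langle B^i\rangle_s\le d\bar\sigma^2\,ds$, and $0\le m_s,n_s^{ii}\le 2\phi(\tfrac{2L}{n-L})$, I obtain $0\le\hat Y_t\le C_G\,\phi(\tfrac{2L}{n-L})$ with $C_G$ depending only on $G,L,T$, which is precisely (iii). The two points I would verify with care are the positivity and sufficient integrability of $\tilde\Gamma^t$ (so that the martingale terms are genuinely mean-zero and \eqref{yw2} applies to $\hat K$), and the membership of the constructed coefficients $a,b,c^{ii},d^{ii}$ in $M_G^2(0,T)$ so that Lemma \ref{the5.2} is available.
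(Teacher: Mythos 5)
Your overall architecture coincides with the paper's: well-posedness and (i) from Theorems \ref{G1} and \ref{my2} using the diagonal structure forced by \textbf{(H3)}, and (iii) by linearizing $\bar Y^n-\underline Y^n$, representing it via Lemma \ref{the5.2}, absorbing the $K$-parts with \eqref{yw2}, and controlling the adjoint process uniformly in $n$. Two of your sub-arguments genuinely differ from the paper's and are both valid: for (ii) you sandwich all $\underline Y^n,\bar Y^n$ between the first elements of the two monotone sequences using the chain in (i), whereas the paper compares against two auxiliary $G$-BSDEs with drivers $\pm L(1+|y|+|z|)$; and for the adjoint bound you take expectations under each representing measure, kill the stochastic integrals by localization, and apply Gronwall, whereas the paper computes $\tilde\Gamma^{t,\varepsilon}_s$ explicitly by $G$-It\^o's formula, exploits the cancellation of the $b\cdot d$ cross term, and bounds $\tilde\Gamma^{t,\varepsilon}_s\le e^{L(1+\bar\sigma^2)(s-t)}\Gamma^{t,\varepsilon}_s$ with $\Gamma^{t,\varepsilon}$ a positive symmetric $\tilde G$-martingale of unit conditional expectation. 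Your route to the $n$-uniform bound is more elementary and yields essentially the same constant.

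The genuine gap is the linearization step itself. You assert that the increment $\underline\varphi_n(\cdot,\bar Y^n,\bar Z^n)-\underline\varphi_n(\cdot,\underline Y^n,\underline Z^n)$ ``linearizes into $a_s\hat Y_s+b_s\hat Z_s$'' with $|a_s|\le L$, $|b_s|\le n$, deferring to later the verification that $a,b,c^{ii},d^{ii}\in M_G^2(0,T)$. But in the $G$-framework this verification fails for the natural difference-quotient coefficients: processes such as $[\underline f_n(s,\bar Y^n_s,\bar Z^n_s)-\underline f_n(s,\underline Y^n_s,\bar Z^n_s)]\hat Y_s^{-1}\mathbf{1}_{\{\hat Y_s\neq0\}}$ involve quotients and indicators that destroy quasi-continuity, and there is no reason for them to belong to $M_G^2(0,T)$; without that membership, the $\tilde G$-SDE \eqref{LSDE2} and Lemma \ref{the5.2} are simply not available. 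This is exactly why the paper invokes Lemma 3.5 of \cite{HW}, which supplies, for each $\varepsilon>0$, admissible bounded coefficients $a^{\varepsilon},b^{\varepsilon},c^{\varepsilon},d^{\varepsilon}\in M_G^2(0,T)$ at the price of error terms $m^{\varepsilon}_s,n^{\varepsilon}_s$ with $|m^{\varepsilon}_s|,|n^{\varepsilon}_s|\le2(L+n)\varepsilon$; these errors must be carried through the entire dual estimate and removed by letting $\varepsilon\to0$ at the very end for fixed $n$ (the order of limits matters, since the error grows with $n$). Your scheme is compatible with this repair, but as written the key step is unjustified. A second, smaller gloss: Lemma \ref{the5.2} cannot be applied to the triple $(\hat Y,\hat Z,\hat K)$ at all, because $\hat K=\bar K^n-\underline K^n$ is not non-increasing; one must first rewrite the equation as in \eqref{eq5}, i.e.\ regard $(\hat Y+\underline K^n,\hat Z,\bar K^n)$ as the solution of a linear $G$-BSDE with terminal value $\underline K^n_T$, and only then does \eqref{yw2} cancel the $\underline K^n$-terms. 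You name the right identity but omit this rewriting, which is what makes the representation legitimate.
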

\begin{proof}
The proof is built on the conclusions of lemma \ref{lemma1}. By assumption \textbf{(H3)}, we have $\underline{g}^{ij}_n=\bar{g}^{ij}_n=0$ whenever $i\neq j$.
Thus $(\underline{g}^{ij}_n-\underline{g}^{ij}_{n+1})_{i,j=1}^d$ is a nonnegative definite matrix.
Then from Theorem \ref{G1} and the comparison theorem \ref{my2}, it is trivial to verify  (i)   in  view of assertions (i)-(ii) from lemma \ref{lemma1}.

In order to prove (ii), setting $w(y,z)=L(1+|y|+|z|)$,  consider the following $G$-BSDEs
\begin{align*}
&U_t=\xi +\int_t^T [w( U_s, V_s)+f_0(s)] ds+\int_t^T [w( U_s, V_s)+g_0^{ii}(s)] d\langle B^i,B^i\rangle_s-\int_t^T  V_s dB_s- (R_T-R_t),\\
&U'_t=\xi +\int_t^T [-w( U'_s, V'_s)+f_0(s)] ds+\int_t^T [-w( U'_s, V'_s)+g_0^{ii}(s)] d\langle B^i,B^i\rangle_s-\int_t^T  V'_s dB_s- (R'_T-R'_t).
\end{align*}
It follows from Theorem \ref{G1} that the above $G$-BSDE admits a unique $\mathfrak{S}_G^{2}(0,T)$-solution $(U,V,R)$ and $(U',V',R')$, respectively.
Then by  (i) of lemma \ref{lemma1} and the comparison theorem \ref{my2},  it holds that for any $n\in\mathbb{N}$
\[
U'_t\leq \underline{Y}^n_t\leq\bar{Y}^n_t \leq U_t, \ \forall t\in[0,T], \]
which implies the desired result.

Finally, we proceed to verify the third assertion (iii).  Without loss of generality, assume  that  $d=1$.
Set $(\hat{Y},\hat{Z})=(\bar{Y}^n-\underline{Y}^n,\bar{Z}^n-\underline{Z}^n)$.
Then for each $t\in[0,T]$, we have
 \begin{equation}\label{eq5}
\hat{Y}_t+\underline{K}^n_t=\underline{K}^n_T+\int_t^T \hat{f}_sds+\int^T_t\hat{g}_sd\langle B\rangle_s-\int_t^T  \hat{Z}_s  dB_s- (\bar{K}^n_T-\bar{K}^n_t),
 \end{equation}
where $ \hat{\varphi}_s=\bar{\varphi}_n(s,\bar{Y}^n_s, \bar{Z}^n_s)-\underline{\varphi}_n(s,\underline{Y}^n_s, \underline{Z}^n_s)$ for $\varphi=f,g.$

By Lemma 3.5 in \cite{HW}, for each $\varepsilon>0$, there exist four bounded processes $a^{\varepsilon}, b^{\varepsilon}$, $c^{\varepsilon}$, $d^{\varepsilon}\in M^2_G(0,T)$
such that for all $s\in\lbrack 0,T\rbrack,$
\[
\hat{f}_{s}=a_{s}^{\varepsilon}\hat{Y}_{s}+b_{s}^{\varepsilon}\hat{Z}
_{s}+m_s-m_{s}^{\varepsilon},\ \hat{g}_{s}=c_{s}^{\varepsilon}\hat{Y}
_{s}+d_{s}^{\varepsilon}\hat{Z}_{s}+n_s-n_{s}^{\varepsilon},
\]
and $|a_{s}^{\varepsilon}|\leq L$, $|c_{s}^{\varepsilon}|\leq L$,
$|b_{s}^{\varepsilon}|\leq n$, $|d_{s}^{\varepsilon}|\leq n$, $|m_{s}^{\varepsilon}|\leq2(L+n)\varepsilon$, $|n_{s}^{\varepsilon}%
|\leq2(L+n)\varepsilon$, $m_{s}=\bar{f}_n(s,\underline{Y}^n_s, \underline{Z}^n_s)-\underline{f}_n(s,\underline{Y}^n_s, \underline{Z}^n_s)$,
$n_{s}=\bar{g}_n(s,\underline{Y}^n_s, \underline{Z}^n_s)-\underline{g}_n(s,\underline{Y}^n_s, \underline{Z}^n_s)$.

In order to estimate the solution to  the above linearized equation (\ref{eq5}), as in \cite{HJPS2014C}, we
 shift from the underlying $G$-expectation space to an auxiliary extended
$\tilde{G}$-expectation space $(\tilde{\Omega},L_{\tilde{G}}^{1}
(\tilde{\Omega}),\mathbb{\hat{E}}^{\tilde{G}})$ with $\tilde{\Omega}=C_{0}([0,\infty),\mathbb{R}^{2})$, where $\tilde{G}$ is given by equation \eqref{yw7}, within which,
  $(B_{t},\tilde{B}_{t})_{t\geq 0}$  denotes the corresponding canonical process.

Applying Lemma \ref{the5.2}  yields that
\begin{align*}
 \hat{Y}_{t}+\underline{K}^n_t
&  =\mathbb{\hat{E}}_{t}^{\tilde{G}}[\tilde{\Gamma}_{T}
^{t,\varepsilon}\underline{K}_{T}^{n}+\int_{t}^{T}(m_s+2G(n_s)-m_{s}^{\varepsilon
}-a_{s}^{\varepsilon}\underline{K}^n_s)\tilde{\Gamma}_{s}^{t,\varepsilon}ds\\
&  -\int_{t}^{T}(n_{s}^{\varepsilon}+c_{s}^{\varepsilon}\underline{K}^n_s
)\tilde{\Gamma}_{s}^{t,\varepsilon}d\langle B\rangle_{s}+\int_{t}^{T}n_s\tilde{\Gamma}_{s}^{t,\varepsilon}d\langle B\rangle_s-\int_{t}^{T}2G(n_s)\tilde{\Gamma}_{s}^{t,\varepsilon}ds],
\end{align*}
where $\{\tilde{\Gamma}^{t,\varepsilon}_{s}\}_{s\in\lbrack t,T\rbrack}$ is given by, c.f. equation \eqref{LSDE2}. From $G$-It\^{o}'s formula, we conclude  that
\begin{equation*}
\tilde{\Gamma}^{t,\varepsilon}_{s}=\exp(\int_{t}^{s}(a^{\varepsilon}_{r}-b^{\varepsilon}_{r}d^{\varepsilon}_{r})dr+\int_{t}^{s}c^{\varepsilon}_{r}d\langle
B\rangle_{r})\mathcal{E}_{s}^{B}\mathcal{E}_{s}^{\tilde{B}}.
\end{equation*}
Here $\mathcal{E}_{s}^{B}=\exp(\int^s_td^{\varepsilon}_{r}dB_r-\frac{1}{2}\int^s_t|d^{\varepsilon}_{r}|^2d\langle B\rangle_r)$ and
$\mathcal{E}_{s}^{\tilde{B}}=\exp(\int^s_tb^{\varepsilon}_{r}d\tilde{B}_r-\frac{1}{2}\int^s_t|b^{\varepsilon}_{r}|^2d\langle \tilde{B}\rangle_r)$.
Therefore using equations \eqref{eq5} and $G$-It\^{o}'s formula we get that
\begin{align}
 \hat{Y}_{t}+\underline{K}^n_t
  \leq \mathbb{\hat{E}}_{t}^{\tilde{G}}
[\int_{t}^{T}(m_s+2G(n_s))\tilde{\Gamma}_{s}^{t,\varepsilon}ds-\int_{t}^{T}m_{s}^{\varepsilon}\tilde{\Gamma}_{s}^{t,\varepsilon}ds
-\int_{t}^{T}n_{s}^{\varepsilon
}\tilde{\Gamma}_{s}^{t,\varepsilon}d\langle B\rangle_{s}]+\underline{K}^n_t,\ \ q.s..\label{myq4}
\end{align}
By (v) of Lemma \ref{lemma1}, we get
\[
0\leq m_s+2G(n_s)\leq 2(1+\bar{\sigma}^2)\phi(\frac{2L}{n-L}).
\]
Note that for each $s\geq t$, $
\tilde{\Gamma}^{t,\varepsilon}_s\leq \exp(L(1+\bar{\sigma}^2)(s-t)){\Gamma}^{t,\varepsilon}_s,
$
where $
{\Gamma}^{t,\varepsilon}_{s}=1+\int_{t}^{s}d^{\varepsilon}_{r}{\Gamma}^{t,\varepsilon}_{r}dB_{r}+\int_{t}%
^{s}b^{\varepsilon}_{r}{\Gamma}^{t,\varepsilon}_{r}d\tilde{B}_{r}.
$
Then by equation \eqref{myq4},
we derive that
\begin{align*}
\hat{Y}_{t}\leq &
[2(1+\bar{\sigma}^2)\phi(\frac{2L}{n-L})+2(L+n)(1+\bar{\sigma}^2)\varepsilon]\mathbb{\hat{E}}^{\tilde{G}}_t[\int_{t}^{T}\exp(L(1+\bar{\sigma}^2)(s-t)){\Gamma}^{t,\varepsilon}_{s}ds]\\
\leq &\frac{\exp(L(1+\bar{\sigma}^2)(T-t))}{L}[2\phi(\frac{2L}{n-L})+2(L+n)\varepsilon].
\end{align*}
Sending $\varepsilon\rightarrow 0$,  we have
\[
\hat{Y}_{t} \leq \frac{2\exp(L(1+\bar{\sigma}^2)(T-t))}{L}\phi(\frac{2L}{n-L}),
\]
which completes the proof.
\end{proof}
\begin{remark}{\upshape
Note that from (i) and (ii) of Lemma \ref{lemma2}, in general  we cannot conclude that $\underline{Y}^n$ (or $\bar{Y}^n$) is a Cauchy sequence in $M^{2}_G(0,T)$  according to Proposition \ref{downward convergence proposition},
which is different from the classical case.
}
\end{remark}

Now we are ready to state the main result of this section.

\begin{theorem}\label{main}
Given assumptions \textbf{(H1)-(H4)} and $\xi\in L^{\beta}_G(\Omega_T)$, the $G$-BSDE (\ref{GBSDE1}) admits a unique  solution  $(Y,Z,K)\in \mathfrak{S}_G^{2}(0,T).$
\end{theorem}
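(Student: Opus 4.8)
The plan is to realize $(Y,Z,K)$ as a limit of the approximating solutions furnished by Lemma \ref{lemma2}, and to obtain uniqueness by a squeezing argument against the same approximations followed by a quadratic-variation identification of the martingale parts. The construction of $Y$ is essentially free given Lemma \ref{lemma2}. By (i) the sequence $\underline Y^n$ is nondecreasing, $\bar Y^m$ is nonincreasing, and $\underline Y^n_t\le\bar Y^m_t$ for all $n,m$; by (iii) their gap is dominated by the \emph{deterministic} quantity $C_G\phi(\tfrac{2L}{n-L})$, uniformly in $t$. Hence for $m>n$ one has $\sup_t|\underline Y^m_t-\underline Y^n_t|\le C_G\phi(\tfrac{2L}{n-L})\to0$ q.s., so $\{\underline Y^n\}$ (and likewise $\{\bar Y^n\}$) is Cauchy in $S_G^2(0,T)$ and both converge to a common limit $Y\in S_G^2(0,T)$ with $\|\underline Y^n-Y\|_{S_G^2}\le C_G\phi(\tfrac{2L}{n-L})$. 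It is here that assertion (iii) does the real work: the merely pointwise monotone limit that would be problematic in the $G$-setting (cf.\ Proposition \ref{downward convergence proposition} and the remark following Lemma \ref{lemma2}) is upgraded to genuine norm convergence.

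Next I would produce $Z$. From the uniform linear growth in Lemma \ref{lemma1}(i), the a priori estimates for Lipschitz $G$-BSDEs behind Theorem \ref{G1} give $\sup_n\|\underline Z^n\|_{M_G^2}<\infty$. Applying $G$-It\^o's formula to $|\underline Y^n-\underline Y^m|^2$ and using non-degeneracy of $G$ (so that $d\langle B\rangle_s\ge\underline{\sigma}^2\,ds$), I would bound $\mathbb{\hat{E}}[\int_0^T|\underline Z^n_s-\underline Z^m_s|^2ds]$ by a multiple of $\|\underline Y^n-\underline Y^m\|_{S_G^2}$ times a factor kept uniformly bounded by the growth estimates; crucially the driver differences enter only \emph{linearly} against $\underline Y^n-\underline Y^m$, so no control of their pointwise behaviour is needed. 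This makes $\{\underline Z^n\}$ Cauchy in $M_G^2(0,T)$, with limit $Z$. I would then define $K$ through \eqref{GBSDE1}, namely $K_t:=Y_0-Y_t-\int_0^tf(s,Y_s,Z_s)ds-\int_0^tg^{ij}(s,Y_s,Z_s)d\langle B^i,B^j\rangle_s+\int_0^tZ_sdB_s$, and identify it as the limit of $\underline K^n$. The delicate point is the convergence of the driver terms: rather than use the Lipschitz constant $n$ of $\underline f_n$ (which blows up), I would write
\[
\underline f_n(s,\underline Y^n_s,\underline Z^n_s)-\big(f(s,Y_s,Z_s)-f_0(s)\big)=\big[\underline f_n-(f-f_0)\big](s,\underline Y^n_s,\underline Z^n_s)+\big[f(s,\underline Y^n_s,\underline Z^n_s)-f(s,Y_s,Z_s)\big],
\]
bounding the first bracket by $\phi(\tfrac{2L}{n-L})$ uniformly in $(y,z)$ via Lemma \ref{lemma1}(v) and the second by $L|\underline Y^n_s-Y_s|+\phi(|\underline Z^n_s-Z_s|)$ via \textbf{(H2)}. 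Splitting according to $\{|\underline Z^n_s-Z_s|\le\delta\}$ and its complement, and combining $\underline Z^n\to Z$ in $M_G^2$ with $\phi(0)=0$, yields $M_G^1$-convergence of the driver integrals (and similarly for $g$). Consequently $\underline K^n_t\to K_t$ in $L_G^1$; since each $\underline K^n$ is a nonincreasing $G$-martingale with $\underline K^n_0=0$, so is $K$, while $K_T\in L_G^2$ because the remaining terms lie in $S_G^2$. Thus $(Y,Z,K)\in\mathfrak{S}_G^2(0,T)$ solves \eqref{GBSDE1}.

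For uniqueness, let $(Y^1,Z^1,K^1)$ and $(Y^2,Z^2,K^2)$ be two solutions. I would first show every solution is squeezed, $\underline Y^n\le Y^i\le\bar Y^n$: since $\underline f_n+f_0\le f\le\bar f_n+f_0$ and likewise for $g$ by Lemma \ref{lemma1}(i), while \textbf{(H3)} makes the relevant matrices diagonal so that the coefficient comparison reduces to an ordering of diagonal entries, the comparison theorem \ref{my2} applies (the approximating drivers $\underline f_n,\bar f_n$ being Lipschitz supply the side required to satisfy \textbf{(A2)}). Letting $n\to\infty$ and invoking Lemma \ref{lemma2}(iii) forces $Y^1=Y^2=:Y$. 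With the $Y$-components equal, subtracting the two equations gives $\int_0^t(Z^1_s-Z^2_s)dB_s+(K^1_t-K^2_t)=\int_0^t\Delta f_s\,ds+\int_0^t\Delta g_s\,d\langle B\rangle_s$, whose right-hand side is of finite variation; comparing quadratic variations (the finite-variation integral and the decreasing-martingale difference $K^1-K^2$ contribute none) yields $\int_0^t|Z^1_s-Z^2_s|^2d\langle B\rangle_s=0$, hence $Z^1=Z^2$ by non-degeneracy, and then $K^1=K^2$.

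The main obstacle I anticipate is the passage to the limit in the driver terms. Because the approximating generators are only $n$-Lipschitz in $z$, any estimate using their Lipschitz constants diverges, and the $G$-framework offers no general dominated convergence theorem to fall back on. The decomposition above—isolating the uniform-in-$(y,z)$ error $\phi(\tfrac{2L}{n-L})$ from Lemma \ref{lemma1}(v) and then using only the \emph{original} modulus $\phi$ of \textbf{(H2)} against the strong $M_G^2$-convergence of $Z$—is exactly what bypasses this; the secondary difficulty is controlling the non-symmetric $G$-martingales $\underline K^n$ inside the a priori $Z$-estimate, which is where the $G$-specific machinery of \cite{HJPS2014,HJPS2014C} is needed.
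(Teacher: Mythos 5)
Your proposal is correct and follows the paper's own strategy almost step for step: existence via the monotone approximations of Lemma \ref{lemma2}, with assertion (iii) upgrading the problematic monotone limit to genuine $S_G^2$-Cauchy convergence (the paper works with the decreasing sequence $\bar Y^n$, you with $\underline Y^n$ --- immaterial); the $Z$-limit via $G$-It\^{o}'s formula on the squared difference, using only the uniform linear growth rather than the exploding Lipschitz constants; $K$ defined through the equation and identified as the limit of the approximating $K^n$; and uniqueness by squeezing $\underline Y^n\le Y^i\le\bar Y^n$ with the comparison theorem \ref{my2} and then identifying $Z$ and $K$ (your quadratic-variation argument is the same computation as the paper's It\^{o} formula applied to $|Y^1_s-Y^2_s|^2$). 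The one place you genuinely deviate is the tail term in the driver convergence (the paper's step 3, equation \eqref{my5}). There the paper bounds $|f(s,Y_s,\bar Z^n_s)-f(s,Y_s,Z_s)|$ on $\{|\bar Z^n_s-Z_s|>\delta\}$ by the linear growth of $f$, producing $(|f_0(s)|+2L|Y_s|+L|\bar Z^n_s|+L|Z_s|)^2\mathbf{1}_{\{|\bar Z^n_s-Z_s|>\delta\}}$ --- fixed-size processes against a vanishing indicator --- and therefore must invoke Theorem 4.7 of \cite{HWZ2016}, a uniform-integrability-type result for $M_G^2$ processes needed precisely because dominated convergence fails under $G$-expectation. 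You instead bound the same term by the modulus, $\phi(|\bar Z^n_s-Z_s|)\le L(1+|\bar Z^n_s-Z_s|)$, so that only the difference $|\bar Z^n-Z|$ appears, and the Chebyshev bound $\mathbf{1}_{\{|\bar Z^n_s-Z_s|>\delta\}}\le\delta^{-2}|\bar Z^n_s-Z_s|^2$ together with $\|\bar Z^n-Z\|_{M_G^2}\rightarrow 0$ closes the argument (for the $M_G^1$-convergence you need of the driver integrals, and in fact even for the paper's $M_G^2$-version). This is a genuine, if local, simplification: it removes the only external ingredient in that step at no cost in generality, while everything else in your write-up coincides with the published proof.
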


\begin{proof}
We shall deal with the existence and uniqueness of solution to $G$-BSDE (\ref{GBSDE1}) separately.  For the  uniqueness, suppose that  both of $(Y^i,Z^i,K^i)$, $i=1,2$ are $\mathfrak{S}_G^{2}(0,T)$-solution to $G$-BSDE (\ref{GBSDE1}), by comparison theorem \ref{my2}, we obtain that for each $n$
\[
\underline{Y}^n_t\leq Y^i_t\leq \bar{Y}^n_t, \ \forall t\in [0,T],
\]
which, together with Lemma \ref{lemma2}, implies
\[
|Y^1_t-Y^2_t|\leq |\bar{Y}^n_t-\underline{Y}^n_t| \leq C_G\phi(\frac{2L}{n-L}), \ \forall n>L.
\]
Note that $Y^i_t$ is a continuous process.
Sending $n\rightarrow\infty$, we deduce that $Y^1=Y^2$ q.s.. Then applying $G$-It\^{o}'s formula upon $\vert Y^1_s-Y^2_s\vert^2$ on $[0,T]$,  we have
$Z^1=Z^2$ and then $K^1=K^2$, which shows that $G$-BSDE (\ref{GBSDE1}) allows for at most one $\mathfrak{S}_G^{2}(0,T)$-solution.

The rest of the proof is devoted  to studying the existence, which will be divided into  three steps. Without loss of generality, we assume $d=1$ and  $g\equiv 0$.

{\it 1 The uniform estimates.}
Let $C(\alpha)$ denote a    constant depending on  parameter $\alpha$  that may change from line to line.
From (ii) of Lemma \ref{lemma2}, we have for all $n$ \[
\|\bar{Y}^n \|_{S^2_G}\leq C(L, \bar{\sigma},\underline{\sigma}, T).\]

Calculating by It\^{o}'s formula upon $\vert \bar{Y}^n \vert^2$, we have for any $t\in[0,T]$,
\begin{align}\label{my3}
\vert \bar{Y}^n_t\vert^2+\int_t^T \vert  \bar{Z}^n_s\vert^2 d\langle B\rangle_s=\vert \xi\vert^2+2\int_t^T\bar{Y}^n_s(\bar{f}_n(s,\bar{Y}^n_s, \bar{Z}^n_s)+f_0(s))ds-2\int_t^T\bar{Y}^n_s \bar{Z}^n_sdB_s-\int_t^T2\bar{Y}^n_sd\bar{K}^n_s.
\end{align}
Since \[
 |\bar{f}^n(t,y,z)|\leq L(1+|y|+|z|),
 \]
we get that
\[
2\bar{Y}^n_s\bar{f}_n(s,\bar{Y}^n_s, \bar{Z}^n_s)\leq 2L(|\bar{Y}^n_s|+|\bar{Y}^n_s|^2)+\frac{4L^2}{\underline{\sigma}^2}|\bar{Y}^n_s|^2+\frac{\underline{\sigma}^2}{4}|\bar{Z}^n_s|^2.
\]
Using BDG inequality and H\"{o}lder's inequality, we derive that
\begin{align*}
 \mathbb{\hat{E}}[\vert \int_0^T \bar{Y}^n_s \bar{Z}^n_sdB_s\vert]\leq C(\bar{\sigma})\mathbb{\hat{E}}[\vert \int_0^T |\bar{Y}^n_s \bar{Z}^n_s|^2ds\vert^{\frac{1}{2}}] &\leq C(\bar{\sigma})\|\bar{Y}^n_s\|_{S^2_G}\|\bar{Z}^n_s\|_{M^2_G}\\
 &\leq C(\bar{\sigma},\underline{\sigma})\|\bar{Y}^n_s\|_{S^2_G}^2+\frac{\underline{\sigma}^2}{8}\|\bar{Z}^n_s\|_{M^2_G}.
\end{align*}
Thus, in view of equation \eqref{my3} we have
\begin{align}\label{my4}
 \mathbb{\hat{E}}[\int_0^T \vert  \bar{Z}^n_s\vert^2 d\langle B\rangle_s]\leq  C(L, \bar{\sigma},\underline{\sigma}, T)+\frac{\underline{\sigma}^2}{2}\|\bar{Z}^n_s\|_{M^2_G}+
 2\mathbb{\hat{E}}[\sup\limits_{s\in[0,T]}|\bar{Y}^n_s||\bar{K}^n_T|].
\end{align}
Recalling that \[
\bar{K}^n_T=\xi-\bar{Y}^n_0+\int_0^T [\bar{f}_n(s,\bar{Y}^n_s, \bar{Z}^n_s)+f_0(s)]ds-\int_0^T  \bar{Z}^n_s dB_s.
\]
By a similar analysis as above,  we obtain
\[
\mathbb{\hat{E}}[\sup\limits_{s\in[0,T]}|\bar{Y}^n_s||\bar{K}^n_T|]\leq  C(L, \bar{\sigma},\underline{\sigma}, T)+\frac{\underline{\sigma}^2}{8}\|\bar{Z}^n_s\|_{M^2_G},
\]
putting together   equation \eqref{my4} with the fact that $\underline{\sigma}^2 \Vert \bar{Z}^n\Vert^2_{M_{G}^2(0,T)} \leq \mathbb{\hat{E}}[\int_0^T \vert  \bar{Z}^n_s\vert^2 d\langle B\rangle_s]$
indicates that
$$ \Vert \bar{Z}^n\Vert_{M_{G}^2(0,T)}+\|\bar{K}^n_T\|_{L^2_G} \leq  C(L, \bar{\sigma},\underline{\sigma}, T), \ \forall n\in\mathbb{N}.$$

{\it 2 The convergence.} From assertions  (i) and (iii) of Lemma \ref{lemma2}, we get that for each $n,m>L$
\[
\|\bar{Y}^n-\bar{Y}^m\|_{S_G^2}\leq \|\bar{Y}^{n\wedge m}-\underline{Y}^{n\wedge m}\|_{S_G^2}\leq  C_G\phi(\frac{2L}{n\wedge m -L})
\]
from which we conclude that $\{ \bar{Y}^n\}_{n\in\mathbb{N}}$  is a Cauchy sequence in $S_G^2(0,T)$.
Then there is a process $Y\in S_G^2(0,T)$ such that $\bar{Y}^n$ converges to $Y$ in $S_G^2(0,T)$.

We continue  to show the convergence of $\bar{Z}^n$ in $ M_G^2(0,T)$.
For each $n,m>L$, applying It\^{o}'s formula to $|\bar{Y}^n-\bar{Y}^m|^2$ yields that
\begin{align*}
&\underline{\sigma}^2\mathbb{\hat{E}}[\int_0^T \vert\bar{Z}^n_s-\bar{Z}^m_s\vert^2 ds]\leq \mathbb{\hat{E}}[\int_0^T \vert\bar{Z}^n_s-\bar{Z}^m_s\vert^2 d\langle B\rangle_s]\\
&\leq 2\mathbb{\hat{E}}[\int_0^T(\bar{Y}^n_s-\bar{Y}^m_s)(\bar{f}_n(s,\bar{Y}^n_s, \bar{Z}^n_s)-\bar{f}_m(s,\bar{Y}^m_s, \bar{Z}^m_s))ds
-\int_0^T(\bar{Y}^n_s-\bar{Y}^m_s)d(\bar{K}^n_s-\bar{K}^m_s)]\\
&\leq  2\mathbb{\hat{E}}[\sup\limits_{s\in[0,T]}|\bar{Y}^n_s-\bar{Y}^m_s|\{L\int_0^T(2+|\bar{Y}^n_s|+|\bar{Y}^m_s|+|\bar{Z}^n_s|+|\bar{Z}^n_s|)ds + |\bar{K}^n_T|+|\bar{K}^m_T|\}]\\
&\leq C(L, \bar{\sigma},\underline{\sigma}, T)\|\bar{Y}^n-\bar{Y}^m\|_{S^2_G},
\end{align*}
where we have used the estimates of {\it step 1} and H\"{o}lder's inequality in the last inequality.
Consequently, we can find some process $Z\in M^2_G(0,T)$ so that  $\bar{Z}^n$ converges to $Z$ in $M_G^2(0,T)$.

Denote
\[
{K}_t:=Y_t-Y_0+\int_0^tf(s,{Y}_s,{Z}_s)ds-\int_0^t{Z}_s dB_s,
\]
we claim that
\begin{align}\label{my5}
\lim_{n\rightarrow\infty}\mathbb{\hat{E}}[\int_{0}^{T}\vert \bar{f}_n(s,\bar{Y}^n_s, \bar{Z}^n_s)+f_0(s)-f(s,{Y}_s,{Z}_s) \vert^{2}ds]=0,
\end{align}
whose proof will be given in {\it step 3}.
Thus it is easy to check that for each $t\in[0,T]$
\[
\lim_{n\rightarrow\infty}\mathbb{\hat{E}}[|{K}_t-\bar{K}^n_t|^2]=0,
\]
which implies that $K$ is a non-increasing $G$-martingale and then $(Y,Z,K)\in\mathfrak{S}_G^{2}(0,T) $ is the solution to $G$-BSDE \eqref{GBSDE1}.

{\it 3 The proof of equation \eqref{my5}.} For each $n>L$, applying lemma \ref{lemma1}, we get that
\begin{align*}
  &\mathbb{\hat{E}}[\int_{0}^{T}\vert \bar{f}_n(s,\bar{Y}^n_s, \bar{Z}^n_s)+f_0(s)-f(s,{Y}_s,{Z}_s) \vert^{2}ds] \\
 & \leq  3\mathbb{\hat{E}}[\int_{0}^{T}\vert \bar{f}_n(s,\bar{Y}^n_s, \bar{Z}^n_s)+f_0(s)-f(s,\bar{Y}^n_s,\bar{Z}^n_s) \vert^{\alpha}ds +\int_{0}^{T}\vert f(s,\bar{Y}^n_s,\bar{Z}^n_s)-f(s,Y_s,\bar{Z}^n_s)\vert^2 ds\\
 &\ \ \ \ +\int_{0}^{T}\vert f(s,{Y}_s,\bar{Z}^n_s)-f(s,Y_s,{Z}_s)\vert^2 ds]
 \\
 & \leq 3 T \phi(\frac{2L}{n-L})+3T L^2 \|\bar{Y}^n-Y\|_{S^2_G}^2+3\mathbb{\hat{E}}[\int_{0}^{T}\vert f(s,{Y}_s,\bar{Z}^n_s)-f(s,Y_s,{Z}_s)\vert^2 ds].
\end{align*}
By the uniform continuity of $f$ in $z$,  for any fixed $\varepsilon >0$, there exists a $\delta$, so that $ \vert f(\cdot,\cdot,x)-f(\cdot,\cdot,y)\vert <\varepsilon$ whenever  $\vert z-q\vert \leq\delta$.
Then for each $N>0$, we obtain that
\begin{align*}
  &\mathbb{\hat{E}}[\int_{0}^{T}\vert f(s,{Y}_s,\bar{Z}^n_s)-f(s,Y_s,{Z}_s)\vert^2 ds]\\ &\leq
  2\mathbb{\hat{E}}[\int_{0}^{T}\vert f(s,{Y}_s,\bar{Z}^n_s)-f(s,Y_s,{Z}_s)\vert^2 \mathbf{1}_{\vert \bar{Z}^n_s-Z_s\vert\leq \delta} ds]
  +2\mathbb{\hat{E}}[\int_{0}^{T}\vert f(s,{Y}_s,\bar{Z}^n_s)-f(s,Y_s,{Z}_s)\vert^2 \mathbf{1}_{\vert \bar{Z}^n_s-Z_s\vert > \delta} ds] \\
   &\leq  2 T \varepsilon^2 + 2\mathbb{\hat{E}}[\int_{0}^{T}(|f_0(s)|+2L\vert Y_s\vert+L\vert \bar{Z}^n_s\vert +L\vert Z_s\vert )^2\mathbf{1}_{\vert \bar{Z}^n_s-Z_s\vert > \delta} ds],
\end{align*}
Since  $\bar{Z}^n$ converges to $Z$ in $M_G^2(0,T)$, it is easy to check that
$\mathbb{\hat{E}}[\int^T_0\mathbf{1}_{\vert \bar{Z}^n_s-Z_s\vert > \delta}]$ is vanishing as $n\rightarrow\infty$.
 Note that $|f_0(s)|+2L\vert Y_s\vert+L\vert \bar{Z}^n_s\vert +L\vert Z_s\vert\in M^2_G(0,T) $. Thus with the help of Theorem 4.7 in \cite{HWZ2016}, we get that
 \[
 \lim_{n\rightarrow\infty}\mathbb{\hat{E}}[\int_{0}^{T}(|f_0(s)|+2L\vert Y_s\vert+L\vert \bar{Z}^n_s\vert +L\vert Z_s\vert )^2\mathbf{1}_{\vert \bar{Z}^n_s-Z_s\vert > \delta} ds]=0.
 \]

 Consequently, putting together the above two inequalities we deduce that
 \[
 \limsup_{n\rightarrow\infty}\mathbb{\hat{E}}[\int_{0}^{T}\vert \bar{f}_n(s,\bar{Y}^n_s, \bar{Z}^n_s)+f_0(s)-f(s,{Y}_s,{Z}_s) \vert^{2}ds] \leq 2 T \varepsilon^2.
 \]
Letting $\varepsilon\rightarrow 0$, we get the desired result.
\end{proof}

\begin{example}{\upshape
For a $1$-dimensional $G$-Brownian motion $B$ with $\underline{\sigma}^2:= -\mathbb{\hat{E}}[-|B_1|^2]$, consider the following $G$-BSDE:
\[
Y_t=\frac{1}{6}|B_T|^6-\frac{5}{2}\underline{\sigma}^2\int^T_t|Z_s|^{\frac{4}{5}}ds-\int^T_t Z_sdB_s-(K_T-K_t).
\]
Note that $f(z)=-\frac{5}{2}\underline{\sigma}^2|z|^{\frac{4}{5}}$ is a uniformly continuous function.
Then by $G$-It\^{o}'s formula and Theorem \ref{main}, it is easy to check that $(\frac{1}{6}|B_t|^6,(B_t)^5,\frac{5}{2}\underline{\sigma}^2\int^t_0 |B_s|^4ds- \frac{5}{2}\int^t_0 |B_s|^4d\langle B\rangle_s)$
is the unique  $\mathfrak{S}_G^{2}(0,T)$-solution.
}
\end{example}

\begin{theorem}[Comparison Theorem]\label{HM7}
Suppose $\xi^\nu\in L^{\beta}_G(\Omega_T)$,  $\nu=1,2$ and $f^\nu,g^{\nu,ij}$ satisfy assumption (H1)-(H4).
Let $(Y^\nu,Z^\nu,K^\nu)$ be the $\mathfrak{S}_G^{2}(0,T)$-solution of $G$-BSDE \eqref{GBSDE1} with data  $(\xi^\nu,f^\nu,g^{\nu,ij})$.
If $\xi^1 \leq \xi^2$, $f^1(t,y,z)\leq f^2(t,y,z)$ and $g^{1,ij}(t,y,z)\leq g^{2,ij}(t,y,z)$ for any $(t,\omega,y,z)$,  then
$Y^1_t\leq Y^2_t$ for each $t$.
\end{theorem}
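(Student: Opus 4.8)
The plan is to transfer the comparison to the Lipschitz setting of Theorem \ref{my2} at the level of the approximating equations \eqref{my1}, and then pass to the limit using the convergence already contained in Lemma \ref{lemma2} and Theorem \ref{main}. For each datum $\nu=1,2$ I would form the sup-convolution approximations $\bar{f}^\nu_n,\bar{g}^{\nu,ij}_n$ of Lemma \ref{lemma1} and let $(\bar{Y}^{n,\nu},\bar{Z}^{n,\nu},\bar{K}^{n,\nu})\in\mathfrak{S}_G^{2}(0,T)$ denote the unique solution of the corresponding level-$n$ $G$-BSDE, whose existence is guaranteed by Lemma \ref{lemma2}. The sandwiching $\underline{Y}^{n,\nu}_t\le Y^\nu_t\le\bar{Y}^{n,\nu}_t$ (obtained exactly as in the uniqueness part of the proof of Theorem \ref{main}, by applying Theorem \ref{my2} to the solution $Y^\nu$) together with assertion (iii) of Lemma \ref{lemma2} gives the uniform bound $\sup_{t}|\bar{Y}^{n,\nu}_t-Y^\nu_t|\le C_G\phi(\frac{2L}{n-L})$, so that $\bar{Y}^{n,\nu}\to Y^\nu$ in $S_G^2(0,T)$ as $n\to\infty$.

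The first genuine step is to verify that the ordering of the data is inherited by these approximations. Because the generator actually appearing in \eqref{my1} is $\bar{f}^\nu_n(t,y,z)+f^\nu_0(t)=\sup_{q\in\mathbb{Q}}\{f^\nu(t,y,q)-n|z-q|\}$ (the subtraction of $\varphi_0$ in the definition cancels against the additive term $f^\nu_0$), the pointwise hypothesis $f^1\le f^2$ at once yields $\bar{f}^1_n+f^1_0\le\bar{f}^2_n+f^2_0$ for every $n$, and likewise $\bar{g}^{1,ij}_n+g^{1,ij}_0\le\bar{g}^{2,ij}_n+g^{2,ij}_0$. Assumption (H3) is preserved: for $i\neq j$ one has $g^{\nu,ij}\equiv 0$, whence $\sup_{q}\{-n|z-q|\}=0$ and the approximant vanishes as well, so the approximating diffusion coefficients stay diagonal and the componentwise inequality on the diagonal entries is precisely the matrix inequality required in Theorem \ref{my2}.

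With these structural facts secured I would apply Theorem \ref{my2} at each fixed level $n>L$. By assertion (iii) of Lemma \ref{lemma1} the approximating generators are Lipschitz (constant $L$ in $y$ and $n$ in $z$), hence satisfy \textbf{(A2)}, and \textbf{(A1)} holds thanks to (H4); combined with $\xi^1\le\xi^2$, the ordering of the generators, and the diagonal matrix inequality just checked, Theorem \ref{my2} delivers $\bar{Y}^{n,1}_t\le\bar{Y}^{n,2}_t$ for all $t\in[0,T]$ and all $n>L$. Letting $n\to\infty$ and using $\bar{Y}^{n,\nu}\to Y^\nu$ in $S_G^2(0,T)$, which gives convergence in $L_G^2(\Omega_t)$ for each fixed $t$ and therefore preserves the inequality quasi-surely, one concludes $Y^1_t\le Y^2_t$ for every $t$.

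The limit passage and the invocation of Theorem \ref{my2} are routine, being already prepared by the earlier results; the step that demands care is the second one. One must use the very same $n$-indexed scheme for both data sets so that Theorem \ref{my2} applies term by term, and must confirm both that the sup-convolution is order preserving in the data and that it leaves (H3) intact, since the matrix-ordering hypothesis of the Lipschitz comparison theorem is meaningful only for the diagonal coefficients produced by (H3). Once this is in place, the comparison for uniformly continuous generators is inherited directly from that for the Lipschitz approximants.
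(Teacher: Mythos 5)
Your proof is correct and follows essentially the same route as the paper: reduce to the Lipschitz comparison theorem (Theorem \ref{my2}) via the sup-convolution approximants of Lemma \ref{lemma1} and pass to the limit using $\bar{Y}^{n,\nu}\to Y^{\nu}$ in $S_G^2(0,T)$. The only difference is that the paper approximates just the larger datum and compares $Y^1$ directly with $\bar{Y}^{2,n}$ via the inequality $\bar{f}^2_n+f^2_0\geq f^2\geq f^1$ (exploiting that Theorem \ref{my2} requires only one of the two equations to be Lipschitz), which spares your order-preservation check for the sup-convolution and the limit passage on the $\nu=1$ side.
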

\begin{proof}
For each $n\in\mathbb{N}$, let $(\bar{Y}^{2,n},\bar{Z}^{2,n},\bar{K}^{2,n} )$ be the  $\mathfrak{S}_G^{2}(0,T)$-solution of $G$-BSDE \eqref{my1} corresponding to $(f^2,g^{2,ij})$.
It is obvious that $ \bar{f}^2_n(t,y,z)+f^2_0(t)\geq f^1(t,y,z)$ and $\bar{g}_n^{2,ii}(t,y,z)+g^{2,ii}_0(t)\geq g^{1,ii}(t,y,z)$. Note that $g^{2,ij}=g^{1,ij}=\bar{g}_n^{2,ij}=0$ whenever $i\neq j$.
Then using Theorem \ref{my2}, we get that ${Y}^1_t\leq \bar{Y}^{2,n}_t$ for each $t$.
Note that $\bar{Y}^{2,n}$ converges to $Y^2$ in $S^2_G(0,T)$.
Sending $n\rightarrow\infty$, we derive that $Y^1_t\leq Y^2_t$. The proof is complete.
\end{proof}

\section{Nonlinear  Feynman-Kac formula}
In this section, we shall utilize  Theorem  \ref{main} to establish a nonlinear  Feynman-Kac formula that slightly generalizes the corresponding result of \cite{HJPS2014C,P10}.
Retaining the notations in previous sections,  for each $(t,x)\in[0,T]\times\mathbb{R}^m$, let's consider    $G$-BSDE
\begin{align}
  dY_s^{t,x}=&-f(s,X_s^{t,x},Y_s^{t,x},Z_s^{t,x})ds-g^{i}(s,X_s^{t,x},Y_s^{t,x},Z_s^{t,x})d\langle B^i,B^i\rangle_s+Z_s^{t,x}dB_s+dK_s^{t,x}, \ s\in[t,T] \notag\\ Y_T^{t,x}=&\Phi(X_T^{t,x}),\label{fbsde2}
\end{align}
where $X^{t,x}$ is defined through  a forward $G$-SDE on the interval $[t,T]$
\begin{equation}\label{fbsde1}
  dX_s^{t,x}=b(s,X_s^{t,x})ds+h^{ij}(s,X_s^{t,x})d\langle B^i,B^j\rangle_s+\sigma(s,X_s^{t,x})dB_s, \ \ X_t^{t,x}=x.
\end{equation}

In the sequel, we use these   running assumptions abbreviated as \textbf{(H5)} :
\begin{itemize}
  \item[(i)] $b,h^{ij}=h^{ji}:[0,T]\times\mathbb{R}^m\rightarrow \mathbb{R}^m; \sigma:[0,T]\times\mathbb{R}^{m}\rightarrow \mathbb{R}^{m\times d}; f,g^{i}:[0,T]\times\mathbb{R}^m\times\mathbb{R}\times\mathbb{R}^d\rightarrow\mathbb{R};\Phi:\mathbb{R}^m\rightarrow \mathbb{R}, $ are all deterministic continuous functions.
  \item[(ii)]There exist two positive integers $q, L$ and a  modulus of continuity $\phi$ such that
  \begin{align*}
     &\vert b(t,x)-b(t,x')\vert+\sum_{i,j=1}^{d}\vert h^{ij}(t,x)-h^{ij}(t,x')\vert
    +\vert \sigma(t,x)-\sigma(t,x')\vert \leq L\vert x-x'\vert,\\
    &\vert\Phi(x)-\Phi(x')\vert +\vert f(t,x,y,z)-f(t,x',y',z')\vert+\sum_{i=1}^{d}\vert g^{i}(t,x,y,z)-g^{i}(t,x',y',z')\vert\\
   &\leq L(1+\vert x\vert^q+\vert x'\vert^q)\vert x-x'\vert+L\vert y-y'\vert+ \phi(\vert z-z'\vert).
  \end{align*}
\end{itemize}

To link the above $G$-BSDE system with  PDE, we need several estimates
 from \cite{HJPS2014C,P10},
\begin{lemma}\label{estimateX}
  Assuming  \textbf{(H5)}, for any $\delta \in[0,T-t]$, there exists a constant $C$ depending on $L',G,p,n,T$ such that
  \begin{align*}
    & \mathbb{\hat{E}}_t[\vert X_{t+\delta}^{t,x}\vert^p]\leq C(1+\vert x\vert^p),\\
&\mathbb{\hat{E}}_t[\vert X_{t+\delta}^{t,x}-X_{t+\delta}^{t,x'} \vert^p]\leq C\vert x-x'\vert^p,\\
&\mathbb{\hat{E}}_t\Big[\sup_{s\in[t, t+\delta]}\vert X_s^{t,x}-x\vert^p\Big]\leq C(1+\vert x\vert^p)\delta^{p/2}.
  \end{align*}
\end{lemma}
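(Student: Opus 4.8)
The plan is to establish the three estimates by the classical stochastic-differential-equation technique, namely applying $G$-It\^o's formula to a power of the solution and then invoking Gronwall's lemma, with all steps carried out at the level of the conditional $G$-expectation $\mathbb{\hat{E}}_t[\cdot]$. The two main tools we need---the Burkholder--Davis--Gundy (BDG) inequality for $G$-It\^o integrals and a Gronwall-type lemma for conditional $G$-expectation---are available from Peng \cite{P10} and Hu et al. \cite{HJPS2014}, and the non-degeneracy of $G$ gives $\underline{\sigma}^2\,ds\leq d\langle B^i,B^i\rangle_s\leq\bar{\sigma}^2\,ds$, which lets us dominate the otherwise singular measure $d\langle B^i,B^j\rangle_s$ by a multiple of $ds$.

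For the first (moment) estimate I would first assume $p\geq 2$ and apply $G$-It\^o's formula to $|X_s^{t,x}|^p$ on $[t,t+\delta]$ (working instead with $(1+|X_s^{t,x}|^2)^{p/2}$ to avoid the singularity at the origin). Taking $\mathbb{\hat{E}}_t[\cdot]$ kills the symmetric $G$-martingale part, while the linear growth of $b,h^{ij},\sigma$ (a consequence of the Lipschitz bound in \textbf{(H5)}(ii) together with continuity) bounds the drift and finite-variation terms, and BDG controls the remaining stochastic integral. This produces a conditional integral inequality of the shape $\mathbb{\hat{E}}_t[|X_{t+\delta}^{t,x}|^p]\leq C(1+|x|^p)+C\int_t^{t+\delta}\mathbb{\hat{E}}_t[|X_r^{t,x}|^p]\,dr$, and Gronwall's lemma yields the claimed bound; the case $1\leq p<2$ then follows by H\"older's inequality.

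For the second estimate I would set $\Delta_s=X_s^{t,x}-X_s^{t,x'}$, which solves the $G$-SDE whose coefficients are the differences $b(s,X_s^{t,x})-b(s,X_s^{t,x'})$, and likewise for $h^{ij}$ and $\sigma$. Applying $G$-It\^o's formula to $|\Delta_s|^p$ and using now the \emph{Lipschitz} bound on the coefficients gives $\mathbb{\hat{E}}_t[|\Delta_{t+\delta}|^p]\leq |x-x'|^p+C\int_t^{t+\delta}\mathbb{\hat{E}}_t[|\Delta_r|^p]\,dr$, so Gronwall again closes the estimate. For the third estimate the supremum sits inside the expectation, so I would instead write $X_s^{t,x}-x$ as the sum of its drift, quadratic-covariation, and stochastic-integral pieces and apply $|a_1+a_2+a_3|^p\leq 3^{p-1}(|a_1|^p+|a_2|^p+|a_3|^p)$. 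H\"older's inequality handles the first two pieces, contributing a factor $\delta^{p-1}\int_t^{t+\delta}(\cdots)\,dr$, while BDG applied to the running supremum of the stochastic integral contributes $\delta^{p/2}$; substituting the first estimate for $\mathbb{\hat{E}}_t[|X_r^{t,x}|^p]$ and using $\delta^{p}\leq T^{p/2}\delta^{p/2}$ for $p\geq 2$ collapses everything to the stated rate $C(1+|x|^p)\delta^{p/2}$.

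There is no deep conceptual obstacle here---the lemma is a standard a priori estimate and is in fact quoted from \cite{HJPS2014C,P10}---but the point requiring genuine care is that the familiar classical tools (dominated convergence, the ordinary Gronwall inequality, the real-valued BDG inequality) cannot be transplanted verbatim into the $G$-expectation setting. The argument must instead route through the conditional-$G$-expectation versions of these results and must control the mutually singular measures $ds$ and $d\langle B^i,B^j\rangle_s$ via the non-degeneracy bounds; once those $G$-analogues are in hand, the three estimates follow exactly as in the classical theory.
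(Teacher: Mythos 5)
The paper offers no proof of this lemma to compare against: it is quoted directly from \cite{HJPS2014C,P10} (``we need several estimates from...''), so the only question is whether your blind reconstruction is sound. It is, and it is essentially the argument those references use: $G$-It\^{o}'s formula plus a Gronwall iteration for the first two bounds, and the decomposition of $X_s^{t,x}-x$ into drift, cross-variation and stochastic-integral pieces, handled by H\"{o}lder and the $G$-BDG inequality, for the third. Two fine points deserve explicit care, though neither is a genuine gap. First, Gronwall's lemma needs $s\mapsto\mathbb{\hat{E}}_t[\vert X_s^{t,x}\vert^p]$ to be a deterministic function; this holds because $X_s^{t,x}$ is a functional of the increments $(B_u-B_t)_{u\geq t}$, which are independent of $\Omega_t$, so that $\mathbb{\hat{E}}_t[\cdot]$ coincides with $\mathbb{\hat{E}}[\cdot]$ on these quantities --- without this remark one would be applying Gronwall $\omega$-wise to a random inequality, which requires justification in a quasi-sure framework. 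Second, annihilating the symmetric-martingale term under $\mathbb{\hat{E}}_t$ presupposes that the integrand $\vert X\vert^{p-2}X\,\sigma(\cdot,X)$ lies in $M_G^2$, which needs a localization or an a priori moment bound (e.g.\ on the Picard iterates); relatedly, what is actually used to control $d\langle B^i,B^j\rangle_s$ is only the upper bound on its density with respect to $ds$ (boundedness of $\Gamma$), not the non-degeneracy of $G$, and in the first estimate no BDG inequality is needed at all once the martingale part is killed --- BDG enters only in the third estimate where the supremum sits inside the expectation.
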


\begin{theorem}
Suppose  \textbf{(H5)} hold. Then $G$-BSDE \eqref{fbsde2} has a unique solution triplet  $(Y^{t,x},Z^{t,x},K^{t,x})\in \mathfrak{S}_G^{2}(t,T).$
\end{theorem}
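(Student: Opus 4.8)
The plan is to obtain the result as an application of Theorem \ref{main} to the Markovian data produced by composing the deterministic coefficients with the forward diffusion. First I would invoke the standard theory of $G$-SDEs: since $b, h^{ij}, \sigma$ are Lipschitz in $x$ by \textbf{(H5)}(ii), equation \eqref{fbsde1} has a unique solution $X^{t,x}$ on $[t,T]$ (see \cite{P10,HJPS2014C}), whose moments are controlled by Lemma \ref{estimateX}. Freezing the spatial variable along $X^{t,x}$, the backward equation \eqref{fbsde2} becomes an instance of \eqref{GBSDE1} on $[t,T]$ with terminal value $\Phi(X_T^{t,x})$, driver $f(s, X_s^{t,x}(\omega), y, z)$, and second-order coefficients $g^{ij}(s,\omega,y,z) = \mathbf{1}_{\{i=j\}}\, g^i(s, X_s^{t,x}(\omega), y, z)$; in particular the off-diagonal terms vanish, so \textbf{(H3)} holds automatically. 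It then suffices to check that this frozen data satisfies \textbf{(H1)}--\textbf{(H4)} for some $\beta>2$, after which Theorem \ref{main} (applied on $[t,T]$ in place of $[0,T]$) yields the unique solution $(Y^{t,x}, Z^{t,x}, K^{t,x}) \in \mathfrak{S}_G^{2}(t,T)$.

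The integrability in the spatial argument is inherited from the polynomial-Lipschitz structure of \textbf{(H5)}(ii) together with Lemma \ref{estimateX}. Taking $x'=0$ in \textbf{(H5)}(ii) shows $\Phi$ and the maps $x\mapsto f(s,x,y,z),\ x\mapsto g^i(s,x,y,z)$ grow at most like $C(1+|x|^{q+1})$, so the first estimate in Lemma \ref{estimateX} gives $\mathbb{\hat{E}}[|X_s^{t,x}|^{(q+1)p}]\le C(1+|x|^{(q+1)p})<\infty$ for every $p$. Fixing any $\beta>2$, this yields $\Phi(X_T^{t,x})\in L_G^{\beta}(\Omega_T)$ and $f(\cdot, X_\cdot^{t,x}, y, z), g^i(\cdot, X_\cdot^{t,x}, y, z)\in M_G^{\beta}(t,T)$ for each fixed $(y,z)$, the membership in these completed spaces following from the continuity of the compositions in $(s,\omega)$ and the uniform moment bounds; this is \textbf{(H1)}. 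Assumption \textbf{(H2)} is immediate, since the estimate in \textbf{(H5)}(ii) passes through the frozen variable to give Lipschitz dependence in $y$ (constant $L$) and uniform continuity in $z$ (modulus $\phi$), both uniform in $(s,\omega)$; after replacing $\phi$ by a nondecreasing, sub-additive envelope bounded by $L(1+|\cdot|)$ if necessary, it is exactly in the form required.

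The point that needs care, and which I regard as the main obstacle, is \textbf{(H4)}: one must show that the Lipschitz regularizations from Lemma \ref{lemma1}, built from the frozen drivers, lie in $M_G^{\beta}(t,T)$. The difficulty is that the compositions $f(\cdot,X_\cdot^{t,x},y,z)$ are not bounded and, since the Lipschitz constant of $f$ in $x$ grows polynomially, they are only \emph{locally} uniformly continuous in $(s,\omega)$, so the sufficient condition recorded in the remark following \textbf{(H4)} does not apply verbatim. The key observation is that the inf/sup-convolutions are nonetheless globally bounded by $L(1+|y|+|z|)$ --- a deterministic constant for fixed $(y,z)$ --- by part (i) of Lemma \ref{lemma1}, while remaining continuous in $(s,\omega)$, being infima or suprema over $q\in\mathbb{Q}$ of the continuous maps $(s,\omega)\mapsto f(s,X_s^{t,x}(\omega),y,q)+n|z-q|$ and using the path regularity of $X^{t,x}$ from Lemma \ref{estimateX}. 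A bounded, continuous, adapted process of this type belongs to $M_G^{\beta}(t,T)$: truncating in the spatial argument reduces matters to the bounded, uniformly continuous situation handled in the remark, and one passes to the limit through the characterization of $M_G^{\beta}$ in \cite{HWZ2016}, the truncation error being absorbed by the moment bounds of Lemma \ref{estimateX}. With \textbf{(H1)}--\textbf{(H4)} in hand, Theorem \ref{main} provides the unique $\mathfrak{S}_G^{2}(t,T)$-solution, finishing the proof.
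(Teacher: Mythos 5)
Your overall route is the same as the paper's: solve the forward equation \eqref{fbsde1}, freeze the spatial variable along $X^{t,x}$ so that \eqref{fbsde2} becomes an instance of \eqref{GBSDE1} on $[t,T]$ (with \textbf{(H3)} automatic since the $g^{i}$ enter only on the diagonal), get \textbf{(H1)} from Lemma \ref{estimateX} together with Theorem 4.16 of \cite{HWZ2016}, observe that \textbf{(H2)} is inherited verbatim, and reduce everything to checking \textbf{(H4)} before citing Theorem \ref{main}. You also correctly identify \textbf{(H4)} as the crux. However, your verification of \textbf{(H4)} has a genuine gap at exactly the point where the paper does its only substantive work. You justify continuity of $(s,\omega)\mapsto\underline{f}_n(s,X_s^{t,x}(\omega),y,z)$ by saying it is an infimum over rationals $r$ of the continuous maps $(s,\omega)\mapsto f(s,X_s^{t,x}(\omega),y,r)+n|z-r|$; but an infimum of continuous functions is in general only upper semicontinuous, so this inference is invalid as stated. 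What makes the infimum genuinely (locally Lipschitz) continuous is that, by \textbf{(H5)}(ii), the family indexed by $r$ is locally equi-Lipschitz in $x$ with a constant independent of $r$, which is precisely the estimate the paper proves:
\[
|\underline{\varphi}_n(t,x,y,z)-\underline{\varphi}_n(t,x',y,z)|+|\bar{\varphi}_n(t,x,y,z)-\bar{\varphi}_n(t,x',y,z)|\leq 4L(1+\vert x\vert^{q}+\vert x'\vert^{q})\vert x-x'\vert .
\]
Without this (or an equivalent equicontinuity statement), neither your continuity claim nor your reduction of the truncated process to the situation of the remark following \textbf{(H4)} is justified.

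Two further remarks. First, the ``path regularity of $X^{t,x}$'' you invoke is not contained in Lemma \ref{estimateX}, which provides only moment estimates; what is actually needed is quasi-continuity of $X^{t,x}$ as a functional of $\omega$ (solutions of $G$-SDEs are not pointwise-defined continuous maps on $\Omega$), and this comes from the $G$-SDE theory together with the characterization of $M_G^{p}$ in \cite{HWZ2016}, which is the form in which Theorem 4.16 must be applied. Second, once the displayed equi-Lipschitz estimate is in hand, your boundedness-plus-truncation detour becomes unnecessary: the paper feeds that estimate, Lemma \ref{estimateX}, and Theorem 4.16 of \cite{HWZ2016} directly into the conclusion that $\underline{\varphi}_n(\cdot,X_\cdot^{t,x},y,z)$ and $\bar{\varphi}_n(\cdot,X_\cdot^{t,x},y,z)$ belong to $M_G^{p}(t,T)$ for every $p\geq 1$. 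Your observation that these processes are uniformly bounded (via Lemma \ref{lemma1}(i), since \textbf{(H5)}(ii) gives \textbf{(H2)} uniformly in $x$) is correct and would simplify the integrability half of the argument, but it cannot substitute for the regularity half.
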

\begin{proof}
By Lemma \ref{estimateX} and assumption \textbf{(H5)}, for each $p\geq 1,$
it is easy to get that $\Phi(X_T^{t,x})\in L^p_G(\Omega_T)$. The facts $f(.,X_.^{t,x},y,z)$, $g^{ij}(.,X_.^{t,x},y,z)\in M^p_G(t,T)$  follow from Theorem 4.16 in \cite{HWZ2016}. Therefore it suffices to verify conditions \textbf{(H4)}, before applying Theorem \ref{main} to complete the proof.

For  any $(t,x,y,z)\in [0,T]\times\mathbb{R}^m\times\mathbb{R}\times\mathbb{R}^d$ and $n\in\mathbb{N}$,
set
\begin{align*}
\underline{\varphi}_n(t,x,y,z)& =\inf\limits_{q\in\mathbb{Q}}\{\varphi(t,x,y,q)+n\vert z-q\vert\}-\varphi_0(t,x),\\
\bar{\varphi}_n(t,x,y,z)& =\sup\limits_{q\in\mathbb{Q}}\{\varphi(t,x,y,q)-n\vert z-q\vert\}-\varphi_0(t,x),
\end{align*}
for $\varphi=f, g^{i}$ and $\varphi_0(t,x)=\varphi(t,x,0,0)$.
By property (ii) of \textbf{(H5)}, we derive that
\[
|\underline{\varphi}_n(t,x,y,z)-\underline{\varphi}_n(t,x^{\prime},y,z)|+|\bar{\varphi}_n(t,x,y,z)-\bar{\varphi}_n(t,x^{\prime},y,z)|\leq 4L(1+\vert x\vert^q+\vert x'\vert^q)\vert x-x'\vert,
\]
which, together with Theorem 4.16 in \cite{HWZ2016} and Lemma \ref{estimateX},  implies that both
$\underline{\varphi}_n(.,X_.^{t,x},y,z)$ and $\bar{\varphi}_n(.,X_.^{t,x},y,z)$ belong to  $M^p_G(t,T)$ for each $p\geq 1.$
\end{proof}

Using the same notations appearing  in the above argument,
for each $(t,x)\in [0,T]\times\mathbb{R}^m$ and $n\in \mathbb{N}$, we consider a sequence of approximating $G$-BSDEs corresponding respectively to generators $(\underline{f}_n,\underline{g}_{n}^{ij})$ and $(\bar{f}_n,\bar{g}_{n}^{ij})$ on $[t,T]$,
\begin{align}
\begin{split}
\underline{Y}^{n,t,x}_s=&\Phi(X^{t,x}_T) +
\int_s^T [\underline{f}_n(r,X^{t,x}_r,\underline{Y}^{n,t,x}_r, \underline{Z}^{n,t,x}_r)+f_0(r,X^{t,x}_r)]dr-\int_s^T  \underline{Z}^{n,t,x}_r dB_r
 \\
& \ \ \ \ \ +\int_s^T[ \underline{g}^{i}_n(r,X^{t,x}_r,\underline{Y}^{n,t,x}_r, \underline{Z}^{n,t,x}_r)+g^{i}_0(r,X^{t,x}_r)]d\langle B^i,B^i\rangle_r - (\underline{K}^{n,t,x}_T-\underline{K}^{n,t,x}_s),\\
 \bar{Y}^{n,t,x}_s=&\Phi(X^{t,x}_T) +\int_s^T [\bar{f}_n(r,X^{t,x}_r,\bar{Y}^{n,t,x}_r, \bar{Z}^{n,t,x}_r)+f_0(r,X^{t,x}_r)]dr -\int_t^T  \bar{Z}^{n,t,x}_s dB_s
 \\
& \ \ \ \ \ +\int_s^T [\bar{g}_n^{i}(r,X^{t,x}_r,\bar{Y}^{n,t,x}_r, \bar{Z}^{n,t,x}_r)+g^{i}_0(r,X^{t,x}_r)]d\langle B^i,B^i\rangle_r -(\bar{K}^{n,t,x}_T-\bar{K}^{n,t,x}_s).
\end{split}
\end{align}
If we denote
$$ \underline{u}^n(t,x):=\underline{Y}^{n,t,x}_t, \ \ \bar{u}^n(t,x):=\bar{Y}^{n,t,x}_t, \ \ (t,x)\in [0,T]\times\mathbb{R}^m.$$
By Proposition 4.2 in \cite{HJPS2014C}, both $\underline{u}^n$ and $\bar{u}^n$ are continuous functions. Similarly we can define
$$ u(t,x):=Y^{t,x}_t, \ \ (t,x)\in [0,T]\times\mathbb{R}^m.$$
Clearly $u(t,x)$ is a well-defined deterministic function from the above theorem. And some regularity can be derived from that of $\underline{u}^n,\bar{u}^n$,
indeed we have
\begin{lemma} \label{my8}
Given assumption \textbf{(H5)}, $u$ is a continuous function of polynomial growth.
\end{lemma}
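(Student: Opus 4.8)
The plan is to derive the continuity and polynomial growth of $u$ by transferring the corresponding properties from the approximating functions $\underline{u}^n$ and $\bar{u}^n$, exploiting the fact that $u$ is sandwiched between them with a uniform error control. First I would establish polynomial growth: by Lemma \ref{lemma2}(ii) applied on the interval $[t,T]$ (with the generator now evaluated along the forward diffusion $X^{t,x}$), the solutions $\bar{Y}^{n,t,x}$ and $\underline{Y}^{n,t,x}$ are uniformly bounded in $S_G^2(t,T)$ by a constant that, through the linear growth of $f,g$ and the estimates of Lemma \ref{estimateX}, depends polynomially on $|x|$. Since $\underline{Y}^{n,t,x}_t\leq Y^{t,x}_t\leq \bar{Y}^{n,t,x}_t$ by the comparison theorem \ref{my2}, evaluating at $s=t$ yields $|u(t,x)|=|Y^{t,x}_t|\leq C(1+|x|^{q'})$ for a suitable exponent $q'$, giving the polynomial growth.

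For continuity, the key observation is the uniform approximation from Lemma \ref{lemma2}(iii): adapted to the present Markovian setting it gives
\[
|\bar{u}^n(t,x)-\underline{u}^n(t,x)|=|\bar{Y}^{n,t,x}_t-\underline{Y}^{n,t,x}_t|\leq C_G\,\phi\!\left(\frac{2L}{n-L}\right)\longrightarrow 0
\]
uniformly in $(t,x)$ as $n\to\infty$. Combined with the sandwich $\underline{u}^n\leq u\leq \bar{u}^n$, this shows that both $\underline{u}^n$ and $\bar{u}^n$ converge to $u$ uniformly on $[0,T]\times\mathbb{R}^m$. Since each $\underline{u}^n$ and $\bar{u}^n$ is continuous by Proposition 4.2 in \cite{HJPS2014C}, the uniform limit $u$ is continuous as well.

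The main obstacle I anticipate is making the uniform estimates of Lemma \ref{lemma2} genuinely uniform in the spatial parameter $x$, rather than merely uniform in $n$ for fixed data. In the original Lemma \ref{lemma2} the terminal value $\xi$ and the driver coefficients are fixed, whereas here $\Phi(X_T^{t,x})$ and the generators $f(\cdot,X^{t,x}_\cdot,\cdot,\cdot)$ vary with $(t,x)$ and grow in $|x|$. One must therefore reexamine the constant $C_G$ appearing in part (iii) and confirm that it does not depend on the terminal data or on $(t,x)$ — inspecting the proof of Lemma \ref{lemma2}, the bound on $\hat Y_t=\bar Y^n_t-\underline Y^n_t$ arises solely from the discrepancy $\phi(2L/(n-L))$ of the generators and the bounded linearization coefficients, so it is indeed independent of $\xi$ and hence of $(t,x)$; this is the crucial point to verify carefully. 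Once that independence is secured, the polynomial-growth bound and the uniform convergence combine routinely to yield both claimed properties of $u$, so I would allocate most of the care to this verification and treat the remainder as a direct consequence of Lemmas \ref{lemma2} and \ref{estimateX} together with the continuity of $\underline{u}^n,\bar{u}^n$.
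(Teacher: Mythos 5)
Your proposal is correct and follows essentially the same route as the paper: polynomial growth comes from sandwiching $Y^{t,x}$ between the approximating solutions (equivalently, between the auxiliary linear-growth $G$-BSDEs underlying Lemma \ref{lemma2}(ii)) whose initial values are bounded polynomially in $|x|$ via Proposition 4.2 of \cite{HJPS2014C} and Lemma \ref{estimateX}, and continuity comes from the uniform-in-$(t,x)$ estimate of Lemma \ref{lemma2}(iii) combined with the continuity of $\underline{u}^n,\bar{u}^n$. Your key verification --- that the constant $C_G$ depends only on $L$, $G$ and $T$, not on the terminal data $\Phi(X_T^{t,x})$ or the zero-order terms (which cancel in the difference $\bar{Y}^n-\underline{Y}^n$) --- is precisely the point the paper's argument rests on.
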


\begin{proof} Without loss of generality, assume that $|\phi(z)|\leq L(1+|z|)$.
Setting $w(y,z)=L(1+|y|+|z|)$,  consider the following $G$-BSDEs on $[t,T]$
\begin{align*}
\begin{split}
&U_s=\Phi(X^{t,x}_T) +\int_s^T [w( U_r, V_r)+f_0(r,X^{t,x}_r)] dr+\int_s^T [w( U_r, V_r)+g_0^{i}(r,X^{t,x}_r)] d\langle B^i,B^i\rangle_r\\
&\ \ \ \ \ \ \ \ \ \ \ \ \ \ -\int_s^T  V_r dB_r- (R_T-R_s),\\
&U'_s=\Phi(X^{t,x}_T) +\int_s^T [-w( U'_r, V'_r)+f_0(r,X^{t,x}_r)] dr+\int_s^T [-w( U'_r, V'_r)+g_0^{i}(r,X^{t,x}_r)] d\langle B^i,B^i\rangle_r\\
&\ \ \ \ \ \ \ \ \ \ \ \ \ \ -\int_s^T  V'_r dB_r- (R'_T-R'_s).
\end{split}
\end{align*}
By (i) of lemma \ref{lemma1} and the comparison theorem \ref{my2}, we have for each fixed $(t,x)$
\[
U'_s\leq \underline{Y}^{n,t,x}_s\leq Y^{t,x}_s\leq\bar{Y}^{n,t,x}_s \leq U_s, \ \forall s\in[t,T], \]
Recalling Proposition 4.2 in \cite{HJPS2014C}, we can find some constant $\bar{C}$ depending on $L, G,m$ and $T$ so that
\[
|U'_t|+|U_t|\leq \bar{C}(1+|x|^{q+1}),
\]
which indicates that $u$ is of   polynomial growth.

Applying (iv) of lemma \ref{lemma1} yields that for each $(t,x)\in[0,T]\times\mathbb{R}^m$,
$$\vert \underline{u}^n(t,x)-{u}(t,x)\vert\leq C_G\phi(\frac{2L}{n-L}), \ \ n>L,$$
i.e., $\underline{u}^n(t,x)$ converges to ${u}(t,x)$ uniformly in $(t,x)$. Consequently, $u$ is continuous in $(t,x)$, which ends the proof.
\end{proof}

The main result of this section is,
\begin{theorem} \label{my6}
 Let assumption \textbf{(H5)} be given. Then  $u$ is the
unique viscosity solution to the following PDE:%
\begin{align}
\begin{cases}
&\partial_tu+G(H(t,x,u,D_xu,D_{x}^{2}u))+\langle
b(t,x),D_{x}u\rangle+f(t,x,u,\sigma^{\mathrm{\top}}(t,x)D_{x}u)=0,\\
&u(T,x)=\Phi(x),\ \  \ \ x\in \mathbb{R}^m,
\end{cases}
\label{feynman}
\end{align}
where
\[
\begin{array}
[c]{cl}
H_{ij}(t,x,v,p,A)=(\sigma^{\mathrm{\top}}(t,x)A\sigma(t,x))_{ij}+2\langle
p,h^{ij}(t,x)\rangle
 +2g^{i}(t,x,v,p\sigma(t,x))\mathbf{1}_{\{i=j\}}
\end{array}
\]
for any $(t,x,v,p,A)\in[0,T]\times\mathbb{R}^{m}\times\mathbb{R}%
\times\mathbb{R}^{m}\times\mathbb{S}_{m}$.
\end{theorem}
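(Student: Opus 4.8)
The plan is to obtain existence through the stability of viscosity solutions and uniqueness through a comparison principle for the limiting equation \eqref{feynman}. The starting point is the Feynman--Kac formula for Lipschitz $G$-BSDEs established in \cite{HJPS2014C}: for each $n$, since $(\bar{f}_n+f_0,\bar{g}^i_n+g^i_0)$ is Lipschitz in $(y,z)$, the function $\bar{u}^n(t,x)=\bar{Y}^{n,t,x}_t$ is the unique viscosity solution of
\[
\partial_t\bar{u}^n+G(H^n(t,x,\bar{u}^n,D_x\bar{u}^n,D_x^2\bar{u}^n))+\langle b(t,x),D_x\bar{u}^n\rangle+\bar{f}_n(t,x,\bar{u}^n,\sigma^{\mathrm{\top}}(t,x)D_x\bar{u}^n)+f_0(t,x)=0,
\]
with terminal value $\Phi$, where $H^n$ is obtained from $H$ by substituting $\bar{g}^i_n+g^i_0$ for $g^i$ (and likewise for $\underline{u}^n$).

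Next I would pass to the limit. By the Markovian analogue of Lemma \ref{lemma1}(v), the generators converge uniformly, $\bar{f}_n+f_0\to f$ and $\bar{g}^i_n+g^i_0\to g^i$ at rate $\phi(\tfrac{2L}{n-L})$; since $G$ is continuous this forces the full operators to converge locally uniformly, $G(H^n)\to G(H)$ included. On the solution side, the bracketing $\underline{Y}^{n,t,x}_s\le Y^{t,x}_s\le\bar{Y}^{n,t,x}_s$ together with the estimate of Lemma \ref{lemma2}(iii) gives $|\bar{u}^n-u|\le C_G\phi(\tfrac{2L}{n-L})\to 0$ uniformly, exactly as in the proof of Lemma \ref{my8}, so $\bar{u}^n\to u$ locally uniformly. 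The standard stability theorem for viscosity solutions then shows that $u$ is simultaneously a viscosity sub- and supersolution of \eqref{feynman}, which settles existence.

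For uniqueness I would invoke a comparison principle for viscosity solutions of \eqref{feynman} within the class of functions of polynomial growth (the growth of $u$ being guaranteed by Lemma \ref{my8}). The structural data are favorable: $G$ is sublinear and continuous, $b,h^{ij},\sigma$ are Lipschitz in $x$, the coefficients $f,g^i$ are Lipschitz in $(x,y)$ up to the polynomial weight $(1+|x|^q+|x'|^q)$, and their only non-Lipschitz dependence sits in the gradient slot and is controlled by the genuine modulus of continuity $\phi$. These are precisely the hypotheses under which the classical doubling-of-variables argument (as in the Crandall--Ishii--Lions framework, adapted to the $G$-operator in \cite{P10,HJPS2014C}) yields comparison, and hence $u$ is the unique polynomially growing viscosity solution.

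The existence half is essentially routine once both the solutions and the operators are shown to converge uniformly, so I expect the genuine difficulty to lie in the comparison step: because the generator is merely uniformly continuous, not Lipschitz, in $z$, one must check that $\phi$ is compatible with the structural condition needed to absorb the gradient error terms generated by doubling the variables. Here the subadditivity and linear growth of $\phi$ imposed in \textbf{(H2)}, combined with the non-degeneracy of $G$, are exactly what make these error terms vanish in the limit, closing the argument.
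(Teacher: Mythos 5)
Your proposal follows essentially the same route as the paper: existence via the Feynman--Kac formula of \cite{HJPS2014C} for the Lipschitz approximations, the uniform convergence $\underline{u}^n,\bar{u}^n\to u$ (Lemmas \ref{lemma2} and \ref{my8}), and viscosity-solution stability, with uniqueness deferred to known comparison results (the paper simply cites \cite{PE,P10}). The only cosmetic difference is in the stability step: you pass to the limit using locally uniform convergence of the full operators $G(H^n)\to G(H)$, whereas the paper works with one approximating sequence only, invoking Proposition 4.3 of \cite{CMI} together with the monotonicity of $G$ and the one-sided bounds $\underline{H}^n\le H$, $\underline{f}_n+f_0\le f$ --- both are valid instances of the same stability argument.
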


For reader's convenience, we provide with the definition of  viscosity solution to equation \eqref{feynman}, see \cite{CMI}.
For every  $v\in C([0,T]\times\mathbb{R}^m)$, denote by $\mathcal{P}^{2,+}v(t,x)$ the
\textquotedblleft {parabolic superjet}\textquotedblright \ of $v$ at $(t,x)$, which refers to  the set of triples
$(a,p,X)\in \mathbb{R}\times \mathbb{R}^{m}\times \mathbb{S}_m$
such
that%
\begin{align*}
v(s,y)  &  \leq v(t,x)+a(s-t)+\langle p,y-x\rangle +\frac{1}{2}\langle X(y-x),y-x\rangle+o(|s-t|+|y-x|^{2}).
\end{align*}
Similarly the
\textquotedblleft
{parabolic subjet}\textquotedblright \ of $v$ at $(t,x)$ can be defined  by $\mathcal{P}
^{2,-}v(t,x):=-\mathcal{P}^{2,+}(-v)(t,x)$.

\begin{definition}
\textup{(i)}  For $v\in C([0,T]\times \mathbb{R}^{m})$, $v$ is  a viscosity subsolution of \eqref{my6} on $[0,T]\times \mathbb{R}^{m}$, if $v(T,x)\leq \Phi(x)$ and for all $(t,x)\in(0,T)\times \mathbb{R}^{m}$,
\[
a+G({H}(t,x,{v}(t,x),p,X))+\langle b(t,x),p\rangle+{f}(t,x,{v}(t,x),p\sigma(t,x))\geq 0, \  \text{for }\ (a,p,X)\in \mathcal{P}
^{2,+}v(t,x).
\]
\textup{(ii)} A {viscosity supersolution} of equation \eqref{my6} on $[0,T]\times \mathbb{R}^{m}$ refers to function
$v\in C([0,T]\times \mathbb{R}^{m})$ with  $v(T,x)\geq \Phi(x)$ such that for each
$(t,x)\in(0,T)\times \mathbb{R}^{m}$,
\[
a+G({H}(t,x,{v}(t,x),p,X))+\langle b(t,x),p\rangle+{f}(t,x,{v}(t,x),p\sigma(t,x))\leq 0, \  \text{for }\ (a,p,X)\in \mathcal{P}
^{2,-}v(t,x).
\]

 A function $v\in C([0,T]\times \mathbb{R}^{m})$ is called a {viscosity solution} of equation \eqref{my6}
if it is simultaneously a viscosity subsolution and a viscosity supersolution of equation \eqref{my6} on $[0,T]\times \mathbb{R}^{m}$.
\end{definition}

\begin{proof}[The proof of Theorem \ref{my6}]
Since   the uniqueness of  viscosity solution to equation  \eqref{my6} is well established, c.f. \cite{PE, P10}, by the symmetry of supsolution and subsolution, we only check that $u$ is a viscosity subsolution.

Given $(t,x)\in(0, T)\times\mathbb{R}^n$ and $(a,p,X)\in\mathcal{P}^{2,+}u(t,x)$, since $\underline{u}^n$ converges to $u$ uniformly  in $(t,x)$, we get that
\begin{align*}
\lim\limits_{n\rightarrow\infty}|\underline{u}^n(t_n,x_n)-u(t,x)|=0,
\end{align*}
whenever $(t_n,x_n)\rightarrow (t,x)$.
With the help of Proposition 4.3 in \cite{CMI}, there exist  sequences
\begin{align*}
n_k\rightarrow\infty,  \ (t_k,x_k)\rightarrow(t,x),\  \text{and}\ (a_k,p_k,X_k)\rightarrow(a,p,X) ,
\end{align*}
such that
\begin{align*}
  (a_k,p_k,X_k)\in\mathcal{P}^{2,+}\underline{u}^{n_k}(t_k,x_k).
\end{align*}

From the Feynman-Kac formula in \cite{HJPS2014C},  we know $\underline{u}^n(t,x)$ is the unique viscosity solution to
\begin{align*}
\begin{cases}
&\partial_t\underline{V}+G(\underline{H}^n(t,x,\underline{V},D_x\underline{V},D_{x}^{2}\underline{V}))+\langle
b(t,x),D_{x}\underline{V}\rangle+\underline{f}_n(t,x,\underline{V},\sigma^{\top}(t,x)D_{x}\underline{V})+f_0(t,x)=0,\\
&\underline{V}(T,x)=\Phi(x),\ \  \ \ x\in \mathbb{R}^m,
\end{cases}
\end{align*}
with
\[
\begin{array}
[c]{cl}
\underline{H}^n_{ij}(t,x,v,p,A)=(\sigma^{\top}(t,x)A\sigma(t,x))_{ij}+2\langle
p,h^{ij}(t,x)\rangle
 +2\underline{g}_n^{i}(t,x,v,p\sigma(t,x))\mathbf{1}_{\{i=j\}}+2g^{i}_0(t,x)\mathbf{1}_{\{i=j\}}.
\end{array}
\]
Thus by the definition of viscosity solution we derive that
\[
a_k+G(\underline{H}^n(t_k,x_k,\underline{u}^{n_k}(t_k,x_k),p_k,X_k))+\langle b(t_k,x_k),p_k\rangle+\underline{f}_n(t_k,x_k,\underline{u}^{n_k}(t_k,x_k),p_k\sigma(t_k,x_k))+f_0(t_k,x_k)\geq 0.
\]
Recalling (i) of lemma \ref{lemma1}, we obtain that for $\varphi=f, g^{i}$
\[
\varphi(t,x,y,z)\geq \underline{\varphi}_n(t,x,y,z)+\varphi_0(t,x), \ \text{ for  all }   (t,x,y,z)\in[0,T]\times\mathbb{R}^m\times\mathbb{R}\times\mathbb{R}^{d},
\]
which implies that $H(t,x,v,p,A)\geq \underline{H}^n(t,x,v,p,A)$.
Then we derive that
\[
a_k+G({H}(t_k,x_k,\underline{u}^{n_k}(t_k,x_k),p_k,X_k))+\langle b(t_k,x_k),p_k\rangle+{f}(t_k,x_k,\underline{u}^{n_k}(t_k,x_k),p_k\sigma(t_k,x_k))\geq 0.
\]
Sending $k\rightarrow\infty$, we get
\[
a+G({H}(t,x,{u}(t,x),p,X))+\langle b(t,x),p\rangle+{f}(t,x,{u}(t,x),p\sigma(t,x))\geq 0,
\]
which is the desired result.
\end{proof}


\begin{thebibliography}{99}
\bibitem{BH2006} P. Briand  and Y. Hu, BSDE with Quadratic Growth and Unbounded Terminal Value. Probability Theory and Related Fields, 136 (2006), 604-618.
\bibitem{BH2008} P. Briand and Y. Hu, Quadratic BSDEs with convex generators and unbounded terminal conditions.  Probability Theory and Related Fields, 141 (2008), 543-567.
\bibitem{CMI} {M. G. Crandall, } {H. Ishii, } and {P. L. Lions, }  User's guide to viscosity solutions of second order partial differential equations.
{Bulletin of The American Mathematical Society,} {27 (1992)}, 1-67.

\bibitem{DHP2011}{ L. Denis, M. Hu and  S. Peng, {Function spaces
and capacity related to a sublinear expectation: application to $G$-Brownian motion paths.} Potential Anal., 34 (2012), 139-161.}

    \bibitem{HJ1} M. Hu and S.  Ji,    Dynamic programming principle for stochastic recursive optimal control problem under
$G$-framework. {Stoch. Proc. Appl.}, {127} (2017), 107-134.

\bibitem{HJPS2014}{M. Hu, S. Ji, S. Peng and Y. Song,  {Backward Stochastic Differential Equations Driven by $G$-Brownian Motion.}
 Stoch. Proc. Appl., 124 (2014), 759-784.}

\bibitem{HJPS2014C}{ M. Hu, S. Ji, S. Peng and Y. Song, {Comparison theorem,
Feynman-Kac formula and Girsanov transformation for BSDEs driven by $G$-Brownian motion.}
  Stoch. Proc. Appl., 124 (2014), 1170-1195.}


  \bibitem {HP09}M. Hu and S. Peng,   On representation theorem of
$G$-expectations and paths of $G$-Brownian motion. Acta Math. Appl. Sin. Engl.
Ser., 25(2009), 539-546.
 \bibitem{HW}{M. Hu and F. Wang, {Ergodic BSDEs driven by G-Brownian motion and their applications}. Stochastics and Dynamics, in press, (2018).}
\bibitem{HWZ2016}{M. Hu, F. Wang and G. Zheng,    {Quasi-continuous random variables and processes under the $G$-expectation framework}.  Stoch. Proc. Appl., 126 (2016),  2367-2387.}
  \bibitem{HLH} Y. Hu, Y. Lin and A. Soumana Hima, Quadratic backward stochastic differential equations driven by $G$-Brownian motion: Discrete solutions and approximation. Stoch. Proc. Appl., in press, (2018).
\bibitem{J} {G. Jia, {A uniqueness theorem for the solution of backward stochastic differential equations.} C. R. Acad. Sci. Paris, Ser. I, 346 (2008) 439-444.}
    \bibitem{J2010}{ G. Jia, {Backward stochastic differential equations with a uniformly continuous generator and related $g$-expectation.}  Stoch. Proc. Appl., 120 (2010), 2241-2257.}
\bibitem {K} M. Kobylanski, Backward stochastic differential equations and partial differential equations with quadratic growth.  Ann. Probab. 28 (2000), 558-602.
\bibitem{LPH} H. Li,  S. Peng and A. Soumana Hima, Reflected solutions of backward stochastic differential equations driven by $G$-Brownian motion.
Sci. China Math. 61 (2018),  1-26.
\bibitem{LM}{J. P. Lepeltier, and J. S.  Martin, { Backward Stochastic differential equations with continuous coefficients.} Statistics and Probability Letters, 34
(1997), 425-430.}
\bibitem {PE} E. Pardoux, Backward stochastic differential equations and viscosity solutions of systems of semilinear
parabolic and elliptic PDEs of second order. In: Decreusefond, L., Gjerde, J., $\varnothing$sendal, B. and \"{U}st\"{u}nel, A.S.,
eds, Progr. Probab. (Birkha\"{u}ser Boston, Boston, MA),  42 (1998), 79-127.
\bibitem{PP} E. Pardoux and S. Peng, Adapted solution of a backward stochastic differential equation. Systems Control Lett. 14 (1990), 55-61.
\bibitem {P07} S. Peng,  $G$-expectation, $G$-Brownian Motion and
Related Stochastic Calculus of It\^o type. Stochastic analysis and applications, Abel Symp. , 2 (2007), 541-567, Springer, Berlin.

\bibitem {P08}S. Peng, Multi-dimensional $G$-Brownian motion and
related stochastic calculus under $G$-expectation. Stoch. Proc. Appl., 118(2008), 2223-2253.

\bibitem{P10}{S. Peng, Nonlinear expectations and stochastic
calculus under uncertainty, in arXiv:1002.4546, 2010.}

\bibitem{STZ} H.M. Soner,  N.  Touzi and J. Zhang,  Martingale representation theorem for
the $G$-expectation. {Stoch. Proc. Appl.} {121}  (2011), 265-287.

\bibitem{STZ1} H.M. Soner,  N.  Touzi and J. Zhang, {Wellposedness of
Second Order Backward SDEs.} Probability Theory and Related Fields,
153 (2012), 149-190.
\bibitem {Song11}Y. Song,  {Some properties on $G$-evaluation and its
applications to $G$-martingale decomposition.} Science China Mathematics, 54 (2011), 287-300.
\bibitem{Song14}Y.  Song,
Gradient Estimates for Nonlinear Diffusion Semigroups by Coupling Methods, in arxiv:1407.5426, 2014.

\end{thebibliography}
\end{document}